\documentclass[reqno]{amsart}


\usepackage{amsmath}
\usepackage{amsfonts}
\usepackage{amssymb,enumerate}
\usepackage{amsthm}
\usepackage[all]{xy}
\usepackage{rotating}
\usepackage{hyperref}
\usepackage{color}


\theoremstyle{plain}
\newtheorem{lem}{Lemma}[section]
\newtheorem{cor}[lem]{Corollary}
\newtheorem{prop}[lem]{Proposition}
\newtheorem{thm}[lem]{Theorem}

\newtheorem*{mthm*}{Main Theorem}

\theoremstyle{definition}
\newtheorem{defn}[lem]{Definition}

\newtheorem{para}[lem]{}

\newtheorem*{convention*}{Convention}
\newtheorem*{ARC}{Auslander-Reiten Conjecture}
\newtheorem*{NLC}{Na\"{\i}ve Lifting Conjecture}





\newcommand{\pd}{\operatorname{pd}}

\newcommand{\id}{\operatorname{id}}



\newcommand{\lotimes}{\otimes^{\mathbf{L}}}
\newcommand{\HH}{\operatorname{H}}

\newcommand{\shift}{\mathsf{\Sigma}}



\newcommand{\ideal}[1]{\mathfrak{#1}}

\newcommand{\fn}{\ideal{n}}





\newcommand{\xra}{\xrightarrow}





\renewcommand{\geq}{\geqslant}
\renewcommand{\leq}{\leqslant}


\newcommand{\Ext}[4][R]{\operatorname{Ext}_{#1}^{#2}(#3,#4)}

\newcommand{\Hom}{\operatorname{Hom}}

\def\Ext{\operatorname{Ext}}

\def\spec{\operatorname{Spec}}

\def\D{\operatorname{\mathsf{D}}}

\def\D{\mathcal{D}}
\def\C{\mathcal{C}}
\def\K{\mathcal{K}}

\def\Mon{\mathrm{Mon}}

\newcommand{\holim}{\operatorname{holim}}

\numberwithin{equation}{lem}

\begin{document}

\bibliographystyle{amsplain}

\title[Na\"{\i}ve liftings of DG modules]{Na\"{\i}ve liftings of DG modules}

\author{Saeed Nasseh}
\address{Department of Mathematical Sciences\\
Georgia Southern University\\
Statesboro, GA 30460, U.S.A.}
\email{snasseh@georgiasouthern.edu}

\author{Maiko Ono}
\address{Institute for the Advancement of Higher Education, Okayama University of Science, Ridaicho, Kitaku, Okayama 700-0005, Japan}
\email{ono@pub.ous.ac.jp}

\author{Yuji Yoshino}
\address{Graduate School of Natural Science and Technology, Okayama University, Okayama 700-8530, Japan}
\email{yoshino@math.okayama-u.ac.jp}

\thanks{Y. Yoshino was supported by JSPS Kakenhi Grant 19K03448.}


\keywords{DG algebra, DG module, DG quasi-smooth, DG smooth, free extensions, lifting, na\"{\i}ve lifting, polynomial extensions, weak lifting.}
\subjclass[2010]{13D07, 16E45.}

\begin{abstract}
Let $n$ be a positive integer, and let $A$ be a strongly commutative differential graded (DG) algebra over a commutative ring $R$. Assume that
\begin{enumerate}[\rm(a)]
\item
$B=A[X_1,\ldots,X_n]$ is a polynomial extension of $A$, where $X_1,\ldots,X_n$ are variables of positive degrees; or
\item
$A$ is a divided power DG $R$-algebra and $B=A \langle X_1,\ldots,X_n \rangle$ is a free extension of $A$ obtained by adjunction of variables $X_1,\ldots,X_n$ of positive degrees.
\end{enumerate}
In this paper, we study na\"{\i}ve liftability of DG modules along the natural injection $A\to B$ using the notions of diagonal ideals and homotopy limits. We prove that if $N$ is a bounded below semifree DG $B$-module such that $\Ext _B ^i (N, N)=0$  for all $i\geq 1$, then  $N$ is na\"{\i}vely liftable to $A$. This implies that $N$ is a direct summand of a DG $B$-module that is liftable to $A$. Also, the relation between na\"{\i}ve liftability of DG modules and the Auslander-Reiten Conjecture has been described.
\end{abstract}

\maketitle

\tableofcontents

\section{Introduction}\label{sec20200314a}

Throughout the paper, $R$ is a commutative ring.\vspace{5pt}

Let $I$ be an ideal of $R$, and assume in this paragraph that $R$ is $I$-adically complete and local. When $I$ is generated by an $R$-regular sequence, lifting property of finitely generated modules and of bounded below complexes of finitely generated free modules along the natural surjection $R\to R/I$ was studied by Auslander, Ding, and Solberg~\cite{auslander:lawlom} and Yoshino~\cite{yoshino}.
Nasseh and Sather-Wagstaff~\cite{nasseh:lql} generalized these results to the case where $I$ is not necessarily generated by an $R$-regular sequence. In this case, they considered the lifting property of differential graded (DG) modules along the natural map from $R$ to the Koszul complex on a set of generators of the ideal $I$.

Let $A\to B$ be a homomorphism of DG $R$-algebras. A right DG $B$-module $N$ is \emph{liftable} to $A$ if there is a right DG $A$-module $M$ such that $N \cong M\lotimes_A B$ (or $N\cong M\otimes_A B$, if $M$ and $N$ are semifree)
in the derived category $\D(B)$.
In their recent works, Nasseh and Yoshino~\cite{nassehyoshino} and Ono and Yoshino~\cite{OY} proved the following results on liftability of DG modules; see~\ref{para20201112a} and~\ref{para20201112c} for notation.

\begin{thm}[\cite{nassehyoshino, OY}]\label{thm20200605a}
Let $A$  be a DG $R$-algebra and $B=A\langle X\rangle$ be a simple free extension of $A$ obtained by adjunction of a variable $X$ of degree $|X|>0$ to kill a cycle in $A$. Assume that $N$ is a semifree DG $B$-module with $\Ext^{|X|+1}_B(N,N)=0$.
\begin{enumerate}[\rm(a)]
\item
If $|X|$ is odd, then $N\oplus N(-|X|)$ is liftable to $A$ (that is, $N$ is weakly liftable to $A$ in the sense of~\cite[Definition 5.1]{NOY}).
\item
If $|X|$ is even and $N$ is bounded below, then $N$ is liftable to $A$.
\end{enumerate}
\end{thm}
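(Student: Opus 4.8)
The plan is to treat the two parities through a single mechanism — the counit of the restriction–induction adjunction together with the diagonal ideal of $B$ over $A$ — and to let the difference between a \emph{finite} and an \emph{infinite} diagonal filtration account for the difference between conclusions (a) and (b). Write $z=dX$, so that a DG $B$-module is the same as a DG $A$-module equipped with the degree $|X|$ operator $\xi$ given by the action of $X$, subject to $[d,\xi]=z\cdot\mathrm{id}$. First I would restrict $N$ to $A$; since $N$ is semifree over $B$ and $B$ is semifree over $A$, the module $N|_A$ is semifree over $A$. The counit $\epsilon\colon (N|_A)\otimes_A B\to N$, $m\otimes b\mapsto mb$, is then a surjective morphism of DG $B$-modules whose source is, by construction, induced from $A$ and hence liftable.

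For part (a), since $|X|$ is odd we have $X^2=0$, so $B=A\oplus AX$ is free of rank two over $A$ and, equivalently, the diagonal ideal $J=\ker(B\otimes_A B\to B)$ satisfies $J^2=0$. A direct computation identifies $\ker\epsilon$ with $N(-|X|)$ as a DG $B$-module, giving the short exact sequence
\[
0\to N(-|X|)\to (N|_A)\otimes_A B\xrightarrow{\ \epsilon\ } N\to 0 .
\]
Its extension class lives in $\Ext^1_B(N,N(-|X|))\cong\Ext^{|X|+1}_B(N,N)$, which vanishes by hypothesis; hence the sequence splits and the liftable module $(N|_A)\otimes_A B$ is isomorphic to $N\oplus N(-|X|)$. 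This is exactly the assertion that $N\oplus N(-|X|)$ is liftable.

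For part (b) the same counit is available, but now $|X|$ is even, $X^{(i)}\neq 0$ for all $i$, and $J^k\neq 0$ for every $k$, so $B$ is infinitely generated over $A$ and a single short exact sequence no longer suffices. Instead I would build a tower of approximate lifts indexed by the diagonal filtration $B\otimes_A B\supset J\supset J^2\supset\cdots$: starting from $N|_A$, one successively deforms the $A$-module structure so that it agrees with a genuine induced structure modulo $J^{k}$. Because $J$ is generated in degree $|X|$, the obstruction to passing from order $k$ to order $k+1$ is, at every stage, a class in the single group $\Ext^{|X|+1}_B(N,N)=0$ (for $|X|=1$ this is the classical degree-two lifting obstruction); thus every approximation extends. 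Passing to the homotopy limit of the tower produces a DG $A$-module $M$ together with a comparison morphism $M\otimes_A B\to N$ in $\D(B)$.

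The main obstacle is the last step of (b): showing that this comparison is an isomorphism and, before that, that the homotopy limit is well behaved. Concretely one must verify that the inverse system satisfies a Mittag–Leffler condition so that the derived-limit ($\lim^1$) correction vanishes and $\holim$ computes the honest limit degreewise. This is exactly where the bounded-below hypothesis enters: since each successive correction raises the $X$-degree, in any fixed internal degree only finitely many stages of the tower contribute, which both guarantees convergence and forces the comparison map to be a quasi-isomorphism. By contrast, in the odd case $J^2=0$ truncates the tower after one step, which is why one obtains only the weak conclusion $N\oplus N(-|X|)$ and needs no boundedness. A secondary technical point to handle carefully is the divided-power module structure in even degree, that is, keeping track of the actions of all the $X^{(i)}$ rather than of $\xi$ alone when identifying the approximations and their obstructions.
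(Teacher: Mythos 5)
This theorem is quoted by the paper from \cite{nassehyoshino, OY} and is not proved in the present text, so the only internal comparison available is with the machinery of Sections 3--5 (diagonal ideals, the filtration $J\supset J^{(2)}\supset\cdots$, and homotopy limits), which is exactly the framework you adopt. Your part (a) is correct and clean: for $|X|$ odd one has $\xi^{(m)}=0$ for $m\geq 2$, so $J\cong B(-|X|)$ as a DG bimodule (note $d\xi=t^o\otimes 1-1^o\otimes t=0$ because the tensor product is taken over $A$), hence $\ker\pi_N\cong N\otimes_BJ\cong N(-|X|)$, and the class of the sequence lies in $\Ext^1_B(N,N(-|X|))\cong\Ext^{|X|+1}_B(N,N)=0$; the splitting for semifree $N$ is exactly \cite[Theorem A]{NassehDG}, i.e.\ Theorem~\ref{extnaive} here. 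This recovers (a) in the spirit of Proposition~\ref{para20201114f} and Corollary~\ref{naive definition}.

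Part (b), however, has a genuine gap. In the even case the successive quotients of the diagonal filtration are $J^{(\ell)}/J^{(\ell+1)}\cong B(-\ell|X|)$, so the obstruction to extending a splitting from order $\ell$ to order $\ell+1$ lives in
$\Ext^1_B\bigl(N,\,N\otimes_BJ^{(\ell)}/J^{(\ell+1)}\bigr)\cong\Ext^{1+\ell|X|}_B(N,N)$,
not in the single group $\Ext^{|X|+1}_B(N,N)$ as you assert. Under the stated hypothesis only the $\ell=1$ obstruction is known to vanish, so the tower does not formally extend; this is precisely why the Main Theorem of the present paper, which runs your argument via Theorems~\ref{Extzero}, \ref{HLtheorem} and \ref{extnaive}, must assume $\Ext^i_B(N,N)=0$ for \emph{all} $i\geq 1$. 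The actual proof of (b) in \cite{OY} (unified with (a) in \cite{NOY} via $j$-operators) exploits the one-variable structure to show that the single class controls everything: once the first-order obstruction (the class of the $j$-operator) vanishes, one changes the semifree basis so that it vanishes identically, and liftability follows. To repair your argument you would either need to show that the higher obstructions are (divided) powers of the first and hence vanish with it, or strengthen the hypothesis. Two smaller points: your homotopy-limit step, even if completed, yields a splitting of $\pi_N$ (na\"{\i}ve liftability), and passing from that to genuine liftability of $N$ in the even case requires the additional equivalence of \cite[Theorem 6.8]{NOY}; and the aside ``for $|X|=1$ this is the classical degree-two lifting obstruction'' is misplaced, since $|X|=1$ is odd and belongs to case (a).
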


Na\"{\i}ve lifting property of DG modules along simple free extensions of DG algebras was introduced in~\cite{NOY} to obtain a new characterization of (weak) liftability of DG modules along such extensions; see~\cite[Theorem 6.8]{NOY}. However, our study of na\"ive lifting property of DG modules in this paper is mainly motivated by a conjecture of Auslander and Reiten as we explain in Section~\ref{sec20201126n}; see Theorem~\ref{thm20210108z}.
For the general definition of na\"{\i}ve liftability, let $A\to B$ be a homomorphism of DG $R$-algebras such that the underlying graded $A$-module $B$ is free.
Let $N$ be a semifree right DG $B$-module, and denote by $N |_A$ the DG $B$-module $N$ regarded as a right DG $A$-module via $A\to B$.
We say that $N$ is {\it na\"ively liftable}  to $A$ if
the DG $B$-module epimorphism
$\pi _N\colon N |_A \otimes _A B \to N$
defined by $\pi_N(x \otimes b)=xb$ splits; see~\ref{para20201113a} for more details.
The purpose of this paper is to prove the following result that deals with this version of liftability along finite free and polynomial extensions of DG algebras; see~\ref{para20201203a} and~\ref{para20201112b} for the definitions and notation.

\begin{mthm*}\label{thm20201114a}
Let $n$ be a positive integer. We consider the following two cases:
\begin{enumerate}[\rm(a)]
\item
$B=A[X_1,\ldots,X_n]$ is a polynomial extension of $A$, where $X_1,\ldots,X_n$ are variables of positive degrees; or
\item
$A$ is a divided power DG $R$-algebra and $B=A \langle X_1,\ldots,X_n \rangle$ is a free extension of $A$ obtained by adjunction of variables $X_1,\ldots,X_n$ of positive degrees.
\end{enumerate}
In either case, if $N$ is a bounded below semifree DG $B$-module with $\Ext _B ^i (N, N)=0$  for  all $i>0$, then  $N$ is na\"ively liftable to $A$. Moreover, $N$ is a direct sum of a DG $B$-module that is liftable to $A$.
\end{mthm*}

A unified method to prove parts (a) and (b) of Theorem~\ref{thm20200605a} is introduced in~\cite{NOY} using the notion of $j$-operators. However, as is noted in~\cite[3.10]{NOY}, this notion cannot be generalized (in a way that useful properties of $j$-operators are preserved) to the case where we have more than one variable. Our approach in this paper in order to prove Main Theorem is as follows. In Section~\ref{sec20201126a}, we define the notions of diagonal ideals and DG smoothness, which is a generalization of the notion of smooth algebras in commutative ring theory. Then using the notion of homotopy limits, discussed in Section~\ref{sec20201126b}, we prove the following result in Section~\ref{sec20201126c}.

\begin{thm}\label{thm2021017a}
Let $A\to B$ be a DG smooth homomorphism. If $N$ is a bounded below semifree DG $B$-module with $\Ext _B ^i (N, N)=0$  for  all $i\geq 1$, then  $N$ is na\"ively liftable to $A$. Moreover, $N$ is a direct sum of a DG $B$-module that is liftable to $A$.
\end{thm}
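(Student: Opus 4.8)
The plan is to translate na\"ive liftability into a concrete splitting problem governed by the \emph{diagonal ideal} of the DG smooth map $A\to B$, and then to kill the resulting obstruction one filtration-level at a time, assembling the section by a homotopy limit. Writing $\mu\colon B\otimes_A B\to B$ for the multiplication map and $J=\ker\mu$ for the diagonal ideal, I would first record the standard identification $N|_A\otimes_A B\cong N\otimes_B(B\otimes_A B)$ of right DG $B$-modules, under which the canonical epimorphism $\pi_N$ becomes $N\otimes_B\mu$. Since the short exact sequence $0\to J\to B\otimes_A B\xrightarrow{\mu} B\to 0$ is degreewise split (for instance by $b\mapsto 1\otimes b$), applying $N\otimes_B-$ keeps it exact and yields a short exact sequence $0\to N\otimes_B J\to N|_A\otimes_A B\xrightarrow{\pi_N} N\to 0$ of right DG $B$-modules. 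Because $N$ is semifree, $\pi_N$ admits a DG section precisely when the class of this extension vanishes in $\Ext^1_B(N,N\otimes_B J)$; thus na\"ive liftability is reduced to an $\Ext^1$-vanishing statement.

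Next I would exploit DG smoothness through the $J$-adic filtration $B\otimes_A B=J^0\supseteq J\supseteq J^2\supseteq\cdots$ and its quotients $Q_p=(B\otimes_A B)/J^p$, with $Q_1=B$. The content of DG smoothness (as developed in Section~\ref{sec20201126a}) is that the conormal module $J/J^2$ is finite free over $B$ on generators in the positive degrees of the variables, and that the associated graded $\bigoplus_p J^p/J^{p+1}$ is the corresponding free graded-commutative (divided power) algebra over $B$. Consequently each $J^p/J^{p+1}$ is a \emph{finite} free $B$-module concentrated in degrees $\geq p\cdot\min_i|X_i|>0$, so that $N\otimes_B(J^p/J^{p+1})\cong\bigoplus_k N(-d_{p,k})$ with every $d_{p,k}>0$. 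The positivity of the variable degrees has the further crucial consequence that the filtration is degreewise finite: in any fixed degree $J^p$ vanishes for $p\gg 0$, whence $N|_A\otimes_A B=N\otimes_B(B\otimes_A B)\cong\holim_p\, N\otimes_B Q_p$ with the tower stabilizing in each degree.

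I would then construct a DG section of $\pi_N$ as a limit of partial sections. Put $\sigma_1=\operatorname{id}_N\colon N\to N\otimes_B Q_1=N$, and suppose a DG section $\sigma_p\colon N\to N\otimes_B Q_p$ of the projection onto $N$ has been built. Lifting $\sigma_p$ along the surjection $N\otimes_B Q_{p+1}\to N\otimes_B Q_p$ is obstructed by a class in $\Ext^1_B\!\big(N,N\otimes_B(J^p/J^{p+1})\big)\cong\bigoplus_k\Ext^1_B(N,N(-d_{p,k}))\cong\bigoplus_k\Ext^{1+d_{p,k}}_B(N,N)$, and this group vanishes because $1+d_{p,k}\geq 1$ and $\Ext^i_B(N,N)=0$ for all $i\geq 1$ by hypothesis. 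Hence a lift $\sigma_{p+1}$ exists and remains a section of the projection to $N$. Passing to the homotopy limit, the compatible family $(\sigma_p)$ determines a DG $B$-module map $\sigma\colon N\to N|_A\otimes_A B$ with $\pi_N\sigma=\operatorname{id}_N$; here the degreewise finiteness of the tower guarantees that the relevant $\lim^1$-term vanishes, so that $\sigma$ is an honest section. This shows $\pi_N$ splits, that is, $N$ is na\"ively liftable to $A$.

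The main obstacle is the interplay of the last two steps: one must arrange the partial sections $\sigma_p$ compatibly along the tower and verify that the homotopy limit of the split surjections $N\otimes_B Q_{p+1}\to N\otimes_B Q_p$ recovers the split surjection $\pi_N$, which is exactly where positivity of the $|X_i|$ and bounded-belowness of $N$ are used to control convergence and to suppress $\lim^1$. The remaining ``moreover'' is then formal: na\"ive liftability exhibits $N$ as a direct summand of $N|_A\otimes_A B$, and the latter is the extension along $A\to B$ of the DG $A$-module $N|_A$, hence liftable to $A$ by definition; thus $N$ is a direct summand of a DG $B$-module liftable to $A$.
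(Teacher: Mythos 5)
Your argument is correct in substance, but it takes a genuinely different route from the paper's. The paper first proves the single cohomological statement $\Ext^1_B(N,N\otimes_B J)=0$: it shows $\Ext^i_B(N,N\otimes_B J/J^{[\ell]})=0$ by induction along the finite filtration of each $J^{[\ell]}/J^{[\ell+1]}$ (Theorem~\ref{Extzero}), deduces that the inclusions $J^{[\ell]}\hookrightarrow J$ induce isomorphisms on $\Ext^i_B(N,N\otimes_B -)$, and then kills $\Ext^i_B(N,N\otimes_B J)$ by a homotopy-limit argument applied to the descending chain $N\otimes_B J^{[\ell]}$, whose infima tend to infinity (Theorem~\ref{HLtheorem}); the splitting of $\pi_N$ is then obtained in one stroke from Theorem~\ref{extnaive}. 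You instead build the section of $\pi_N$ directly, by successive lifting along the tower of quotients $N\otimes_B\bigl(B^e/J^{[p]}\bigr)$ and passing to the degreewise stabilizing inverse limit. Both proofs consume exactly the same inputs --- $\inf J^{[\ell]}\geq \ell$, finiteness of the semifree bases of $J^{[\ell]}/J^{[\ell+1]}$, bounded-belowness of $N$, and $\Ext^i_B(N,N)=0$ for $i\geq 1$ --- so the difference is one of packaging: the paper shows the obstruction group vanishes globally, while you construct the splitting by successive approximation. Your version has the mild advantage of being explicit about where the section comes from; the paper's has the advantage of isolating the reusable statement $\Ext^{i}_B(N,N\otimes_B J)=0$ for all $i\geq 1$.

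Two points in your write-up need repair, though neither is fatal. First, for a general DG smooth map the subquotient $J^{[p]}/J^{[p+1]}$ is only \emph{semifree} with a finite basis, i.e.\ it carries a finite filtration by DG submodules with subquotients $B(-a_i)$, $a_i>0$; it is not in general a direct sum of shifts of $B$ as a DG module, because the differential of a basis monomial typically involves earlier basis elements (this already happens for $\xi_i^{(2)}$ in a free extension). Hence your identification $N\otimes_B(J^{p}/J^{p+1})\cong\bigoplus_k N(-d_{p,k})$ and the ensuing direct-sum computation of the obstruction group are not literally valid; the vanishing $\Ext^i_B\bigl(N,N\otimes_B J^{[p]}/J^{[p+1]}\bigr)=0$ must instead be proved by induction on that finite filtration via the long exact sequence, which is exactly the content of Theorem~\ref{Extzero}. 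Second, your induction requires the lifts $\sigma_{p+1}$ to be \emph{strictly} compatible with $\sigma_p$, not merely compatible up to homotopy, for the inverse limit to produce an honest chain map. This does follow: since $N$ is graded-free and the transition maps are graded-surjective, a graded lift of $\sigma_p$ always exists, and the obstruction to correcting it to a chain map is a class in $\Ext^1_B\bigl(N,N\otimes_B(J^{[p]}/J^{[p+1]})\bigr)=0$; but this step should be spelled out rather than asserted.
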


The proof of Main Theorem then follows after we show that under the assumptions of Main Theorem, $A\to B$ is DG smooth. This takes up the entire Section~\ref{sec20210107a}.

\section{Terminology and preliminaries}\label{sec20200314b}

We assume that the reader is fairly familiar with complexes, DG algebras, DG modules, and their properties. Some of the references on these subjects are~\cite{avramov:ifr,avramov:dgha, felix:rht, GL}. In this section, we specify the terminology and include some preliminaries that will be used in the subsequent sections.

\begin{para}\label{para20200329a}
Throughout the paper, $A$ is a \emph{strongly commutative differential graded $R$-algebra} (\emph{DG $R$-algebra}, for short), that is,
\begin{enumerate}[\rm(a)]
\item
$A  = \bigoplus  _{n \geq 0} A _n$ is a non-negatively \emph{graded commutative} $R$-algebra\footnote{Some authors use the cohomological notaion for DG algebras. In such a case, $A$ is described as  $A  = \bigoplus  _{n \leq 0} A ^n$, where $A^{n} = A_{-n}$ and $A$ is called non-positively graded.}, i.e., for all homogeneous elements $a, b \in A$ we have $ab = (-1)^{|a| |b|}ba$, and  $a^2 =0$  if the degree of $a$ (denoted $|a|$) is odd;
\item
$A$ is an $R$-complex with a differential $d^A$ (that is, a graded $R$-linear map $A\to A$ of degree $-1$ with $(d^A)^2=0$); such that
\item
$d^A$ satisfies the \emph{Leibniz rule}: for all homogeneous elements $a,b\in A$ the equality $d^A(ab) = d^A(a) b + (-1)^{|a|}ad^A(b)$ holds.
\end{enumerate}
A \emph{homomorphism} $f\colon A\to B$ of DG $R$-algebras is a graded $R$-algebra homomorphism of degree $0$ which is also a chain map, that is, $d^Bf=fd^A$.
\end{para}

\begin{para}\label{para20201203a}
An $R$-algebra $U$ is a \emph{divided power algebra} if a sequence of elements $u^{(i)}\in U$ with $i\in \mathbb{N}\cup \{0\}$ is correspondent to every element $u\in U$ with $|u|$ positive and even such that the following conditions are satisfied:
\begin{enumerate}[\rm(1)]
\item
$u^{(0)}=1$, $u^{(1)}=u$, and $|u^{(i)}|=i|u|$ for all $i$;
\item
$u^{(i)}u^{(j)}=\binom{i+j}{i}u^{(i+j)}$ for all $i,j$;
\item
$(u+v)^{(i)}=\sum_{j}u^{(j)}v^{(i-j)}$ for all $i$;
\item
for all $i\geq 2$ we have
$$
(vw)^{(i)}=
\begin{cases}
0& |v|\ \text{and}\ |w|\ \text{are odd}\\
v^iw^{(i)}& |v|\ \text{is even and}\ |w|\ \text{is even and positive}
\end{cases}
$$
\item
For all $i\geq 1$ and $j\geq 0$ we have
$$\left(u^{(i)}\right)^{(j)}=\frac{(ij)!}{j!(i!)^j}u^{(ij)}.$$
\end{enumerate}
A \emph{divided power DG $R$-algebra} is a DG $R$-algebra whose underlying graded $R$-algebra is a divided power algebra.
\end{para}

\begin{para}\label{para20201126d}
If $R$ contains the field of rational numbers and $U$ is a graded $R$-algebra, then $U$ has a structure of a divided power $R$-algebra by defining $u^{(m)}=(1/m!)u^m$ for all $u\in U$ and integers $m\geq 0$; see~\cite[Lemma 1.7.2]{GL}. Also, $R$ considered as a graded $R$-algebra concentrated in degree $0$ is a divided power $R$-algebra.
\end{para}

\begin{para}\label{para20201112a}
Let $t\in A$ be a cycle, and let $A \langle X \rangle$ with the differential $d$ denote the \emph{simple free extension of $A$} obtained by adjunction of a variable $X$ of degree $|t|+1$ such that $dX = t$. The DG $R$-algebra $A \langle X \rangle$ can be described as $A \langle X \rangle = \bigoplus_{m\geq 0} X^{(m)}A$ with the conventions $X^{(0)}=1$ and $X^{(1)}=X$, where $\{X^{(m)}\mid m\geq 0\}$ is a free basis of $A\langle X\rangle$ such that:
\begin{enumerate}[\rm(a)]
\item
If $|X|$ is odd, then $X^{(m)}=0$ for all $m\geq 2$, and for all $a + Xb\in A \langle X \rangle$ we have
$$
d(a + Xb)=d^Aa + tb - X d^Ab.
$$

\item
If $|X|$ is even, then $A \langle X \rangle$ is a divided power DG $R$-algebra with the algebra structure given by $X^{(m)}X^{(\ell)} =\binom{m+\ell}{m} X^{(m+\ell)}$ and the differential structure defined by $dX^{(m)}=X^{(m-1)}t$ for all $m\geq 1$.
\end{enumerate}

Also, let $A [X]$ denote the \emph{simple polynomial extension of $A$} with $X$ described as above, that is, $A [X] = \bigoplus_{m\geq 0} X^{m}A$ with $d^{A [X]}(X^m)=mX^{m-1}t$ for positive integers $m$. Note that here $X^m$ is just the ordinary power on $X$.

If $R$ contains the field of rational numbers, then $A\langle X\rangle=A[X]$.
\end{para}

\begin{para}\label{para20201112b}
Let $n$ be a positive integer, and let $A \langle X_1,\ldots,X_n\rangle$ (which is also denoted by $A \langle X_i \mid 1\leq i\leq n\rangle$) be a \emph{finite free extension of the DG $R$-algebra $A$} obtained by adjunction of $n$ variables. In fact, setting $A^{(0)} =A$ and $A^{(i)}=A^{(i-1)}\langle X_i \rangle$  for all $1 \leq i \leq n$ such that $d^{A^{(i)}}X_i$ is a cycle in $A^{(i-1)}$, we have $A \langle X_1,\ldots,X_n\rangle=A^{(n)}$. We also assume that  $0 < |X_1| \leq  \cdots \leq |X_n|$. Note that there is
a sequence of DG $R$-algebras $A= A^{(0)} \subset A ^{(1)} \subset \cdots \subset A^{(n)}=A \langle X_1,\ldots,X_n\rangle$.

In a similar way, one can define the \emph{finite polynomial extension of the DG $R$-algebra $A$}, which is denoted by $A [X_1,\ldots,X_n]$.
\end{para}

\begin{para}\label{para20201205a}
Our discussion in~\ref{para20201112b} can be extended to the case of adjunction of infinitely countably many variables to the DG $R$-algebra $A$.
Let $\{ X_i \mid i \in \mathbb{N} \}$ be a set of variables.
Attaching a degree to each variable such that $0 < |X_1| \leq  |X_2| \leq  \cdots$, similar to~\ref{para20201112b},
we construct a sequence
$A= A^{(0)} \subset A ^{(1)} \subset  A^{(2)}  \subset  \cdots$
of DG $R$-algebras.
We define an \emph{infinite free extension of the DG $R$-algebra $A$} obtained by adjunction of the variables $X_1, X_2, \ldots$ to be
$A \langle X_i \mid  i\in \mathbb{N} \rangle = \bigcup _{n \in \mathbb{N}} A^{(n)}$. It is sometimes convenient for us to use the notation $A \langle X_1,\ldots,X_n\rangle$ with $n=\infty$ instead of $A \langle X_i \mid  i\in \mathbb{N} \rangle $.

For the infinite extension $A \langle X_i \mid  i\in \mathbb{N} \rangle $ of the DG $R$-algebra $A$, we always assume the {\it degree-wise finiteness condition}, that is, for all $n\in \mathbb{N}$, we assume that the set $\{ i \mid  |X_i| = n \}$ is finite. As an example of this situation, let  $R \to S$ be a surjective ring homomorphism of commutative noetherian rings.
Then the Tate resolution of $S$ over  $R$  is an extension of the DG $R$-algebra $R$ (with infinitely countably many variables, in general) which satisfies the degree-wise finiteness condition; see~\cite{Tate}.

In a similar way, one can define the \emph{infinite polynomial extension of the DG $R$-algebra $A$}, which is denoted by $A [X_i\mid i\in \mathbb{N}]$ or $A [X_1,\ldots,X_n]$ with $n=\infty$.
\end{para}

\begin{para}\label{para20201124a}
For $n\leq \infty$, let $\Gamma=\bigcup_{i=1}^{n}\{X_i^{(m)}\mid m\geq 0\}$ with the conventions from~\ref{para20201112a} that if $|X_i|$ is odd, then $X_i^{(0)}=1$, $X_i^{(1)}=X_i$, and $X_i^{(m)}=0$ for all $m\geq 2$.

If $n<\infty$, then the set $\{ X_1 ^{(m_1)}X_2^{(m_2)}\cdots X_n^{(m_n)}\mid  X_i^{(m_i)}\in \Gamma\ (1 \leq i \leq n)\}$ is a basis for the underlying graded free $A$-module $A \langle X_1,\ldots,X_n\rangle$.

If $n=\infty$, then the set $\{ X_{i_1} ^{(m_{i_1})}X_{i_2}^{(m_{i_2})}\cdots X_{i_t}^{(m_{i_t})}\mid  X_{i_j}^{(m_{i_j})}\in \Gamma\ (i_j\in \mathbb{N}, t<\infty)\}$ is a basis for the underlying graded free $A$-module $A \langle X_i \mid  i\in \mathbb{N} \rangle$.

The cases $A [X_1,\ldots,X_n]$ and $A [X_i\mid i\in \mathbb{N}]$ can be treated similarly by using ordinary powers $X_i^m$ instead of divided powers $X_i^{(m)}$.
\end{para}

\begin{para}\label{para20201112c}
A right \emph{DG $A$-module} $(M, \partial^M)$ (or simply $M$) is a graded right $A$-module $M=\bigoplus_{i\in \mathbb{Z}}M_i$ that is also an $R$-complex with the differential $\partial^M$ satisfying the Leibniz rule, that is, the equality
$\partial^M(ma) = \partial^M(m)\ a + (-1)^{|m|} m\ d^A(a)$ holds for all homogeneous elements $a\in A$ and $m\in M$.

All DG modules considered in this paper are right DG modules, unless otherwise stated. Since $A$ is graded commutative, a DG $A$-module $M$ is also a left DG $A$-module with the left $A$-action
defined by $am = (-1)^{|m||a|} ma$  for  $a\in A$ and $m \in M$.

A \emph{DG submodule} of a DG $A$-module $M$ is a subcomplex that is a DG $A$-module under the operations induced by $M$, and a \emph{DG ideal} of $A$ is a DG submodule of $A$.

For a DG $A$-module $M$, let $\inf (M) = \inf \{ i \in \mathbb{Z}\mid  M_i \not=0 \}$. We say that $M$ is \emph{bounded below} if $\inf (M) > -\infty$, that is, if $M_i=0$ for all $i\ll 0$.
Note that $\inf (L) \geq \inf (M)$ if $L$ is a DG $A$-submodule of $M$. For an integer $i$, the \emph{$i$-th shift} of $M$, denoted $\shift^i M$ or $M(-i)$, is defined by $\left(\shift^i M\right)_j = M_{j-i}$ with $\partial_j^{\shift^i M}=(-1)^i\partial_{j-i}^M$.
\end{para}

\begin{para}\label{para20201114a}
Let $A^{o}$ denote the \emph{opposite DG $R$-algebra} which is equal to $A$ as a set, but to distinguish elements in $A^o$ and $A$ we write $a^o\in A^o$ if $a \in A$.
The product of elements in $A^o$ and the differential $d^{A^o}$ are given by the formulas
$a^ob^o = (-1)^{|a||b|}(ba)^o=(ab)^o$ and $d^{A^o} (a^o) = d^A(a)^o$, for all homogeneous elements $a, b \in A$.
Since $A$ is a graded commutative DG $R$-algebra, the identity map  $A \to A^o$ that corresponds $a \in A$ to $a^o\in A^o$ is a DG $R$-algebra isomorphism.
From this point of view, there is no need to distinguish between $A$ and $A^o$.
However, we will continue using the notation $A^o$ to make it clear how we use the graded commutativity of $A$.

Note that every right (resp. left) DG $A$-module $M$ is a left (resp. right) DG $A^o$-module with $a^om=(-1)^{|a^o||m|}ma$ (resp. $ma^o=(-1)^{|a^o||m|}am$) for all homogeneous elements $a\in A$ and $m\in M$.
\end{para}

\begin{para}\label{para20201114e}
Let $A\to B$ be a homomorphism of DG $R$-algebras such that $B$ is projective as an underlying graded $A$-module. Let $B^e$ denote the \emph{enveloping DG $R$-algebra} $B^o \otimes_A B$ of $B$ over $A$.
The algebra structure on $B^e$ is given by
$$
 (b_1^o \otimes b_2)( {b'}_1^o \otimes {b'}_2)
 =  (-1)^{|{b'}_1| |b_2|} b_1^o {b'}_1^o \otimes b_2{b'}_2
=  (-1)^{|{b'}_1| |b_2|+|{b'}_1| |b_1|}({b'}_1 b_1)^o\otimes b_2{b'}_2
$$
for all homogeneous elements $b_1, b_2, b'_1, b'_2\in B$, while the graded structure is given by $(B^e)_i  = \sum_{j} (B^o)_j\otimes_A B_{i-j}$ and the differential $d^{B^e}$ is defined by
$d^{B^e} (  b_1^o \otimes b_2 ) = d^{B^o} (  b_1^o) \otimes b_2 + (-1)^{|b_1|} b_1^o \otimes d^{B}(b_2 )$.

Note that $B$ and $B^o$ are regarded as subrings of $B^e$.
Moreover, the map $B^o \to B^e$ defined by $b^o\mapsto b ^o \otimes 1$ is an injective DG $R$-algebra homomorphism, via which we can consider $B^o$  as a DG $R$-subalgebra of $B^e$.
Since $B$ is graded commutative, $B \cong B^o$ and hence, $B$ is a DG $R$-subalgebra of $B^e$ as well.

Note also that DG $B^e$-modules are precisely DG $(B, B)$-bimodules. In fact, for a DG $B^e$-module $N$, the right action of an element of $B^e$ on $N$ yields the two-sided module structure $n ( b_1 ^o \otimes b_2) = (-1)^{|b_1||n|}b_1 n b_2$ for all homogeneous elements $n \in N$ and $b_1, b_2 \in B$.
Hence, the differential $\partial ^N$ satisfies the Leibniz rule on both sides:
$\partial ^N ( b_1nb_2 ) = d^B(b_1)nb_2 + (-1)^{|b_1|} b_1 \partial^N(n)b_2 + (-1)^{|b_1|+|n|} b_1n d^B(b_2)$
for all homogeneous elements $n \in N$ and $b_1, b_2 \in B$.
\end{para}

\begin{para}\label{para20201206d}
Consider the notation from~\ref{para20201124a} and~\ref{para20201114e}. Let
\begin{align*}
\Mon(\Gamma)\!\!&=\!\!
\begin{cases}
\!\!\{ (1^o \otimes X_1 ^{(m_1)})\cdots (1^o \otimes X_n^{(m_n)})\mid  X_i^{(m_i)}\in \Gamma\ (1 \leq i \leq n) \}&\!\!\!\text{if}\ n<\infty\\
\!\!\{ (1^o \otimes X_{i_1} ^{(m_{i_1})})\cdots (1^o \otimes X_{i_t}^{(m_{i_t})})\mid  X_{i_j}^{(m_{i_j})}\in \Gamma\ (i_j\in \mathbb{N}, t<\infty) \}&\!\!\!\text{if}\ n=\infty
\end{cases}\\
&=
\begin{cases}
\{ 1^o \otimes (X_1 ^{(m_1)}\cdots X_n^{(m_n)})\mid  X_i^{(m_i)}\in \Gamma\ (1 \leq i \leq n) \}&\text{if}\ n<\infty\\
\{ 1^o \otimes (X_{i_1} ^{(m_{i_1})} \cdots X_{i_t}^{(m_{i_t})})\mid  X_{i_j}^{(m_{i_j})}\in \Gamma\ (i_j\in \mathbb{N}, t<\infty) \}&\text{if}\ n=\infty
\end{cases}
\end{align*}
Then the underlying graded $A \langle X_1,\ldots,X_n\rangle^o$-module $A \langle X_1,\ldots,X_n\rangle^e$ with $n\leq \infty$ is free with the basis $\Mon(\Gamma)$.\footnote{``$\Mon$'' is chosen for ``monomial.''}

Once again, the case of $A [X_1,\ldots,X_n]$ with $n\leq \infty$ can be treated similarly by using $X_i^m$ instead of $X_i^{(m)}$.
\end{para}

\begin{para}\label{para20201124b}
A \emph{semifree basis} (or \emph{semi-basis}) of a DG $A$-module $M$ is a well-ordered subset $F\subseteq M$ that is a basis for the underlying graded $A$-module $M$ and satisfies $\partial^M(f)\in \sum_{e<f}eA$ for every element $f\in F$. A DG $A$-module $M$ is \emph{semifree}\footnote{Keller~\cite{keller:ddgc} calls these ``DG modules that have property (P).''} if it has a semifree basis. Equivalently, the DG $A$-module $M$ is semifree if there exists an increasing filtration $$0=F_{-1}\subseteq F_0\subseteq F_1\subseteq \cdots\subseteq M$$ of DG $A$-submodules of $M$ such that $M=\bigcup_{i\geq 0}F_i$ and each DG $A$-module $F_i/F_{i-1}$ is a direct sum of copies of $A(n)$ with $n\in \mathbb{Z}$; see~\cite{AH},~\cite[A.2]{AINSW}, or~\cite{felix:rht}.
\end{para}

\begin{para}\label{para20201124c}
Let $\C(A)$ denote the abelian category of DG $A$-modules and DG $A$-module homomorphisms.
Also, let $\K(A)$ be the \emph{homotopy category} of DG $A$-modules. Recall that objects of $\K(A)$ are DG $A$-modules and
morphisms are the set of homotopy equivalence classes of DG $A$-module homomorphisms $\Hom _{\K(A)} (M, L) = \Hom _{\C(A)} (M, L)/ \sim$,
where $f \sim g$ for $f,g\in \Hom _{\C(A)} (M, L)$ if and only if there is a graded $A$-module homomorphism $h\colon M \to L (-1)$ of underlying graded $A$-modules such that $f - g = \partial ^L h + h \partial ^M$. It is known that $\K(A)$ is triangulated category.
In fact, there is a triangle $M \to L \to Z \to \shift M$ in $\K(A)$
if and only if there is a short exact sequence
$0 \to M \to L \oplus L' \to Z \to 0$
in $\C(A)$ in which $L'$ is splitting exact, i.e., $\id_{L'}\sim 0$. The \emph{derived category} $\D(A)$ is obtained from $\C(A)$ by formally inverting the quasi-isomorphisms (denoted $\simeq$); see, for instance,~\cite{keller:ddgc} for details.

For each integer $i$ and DG $A$-modules $M,L$ with $M$ being semifree, $\Ext^i_A(M,L)$ is defined to be $\HH_{-i}\left(\Hom_A(M,L)\right)$. Note that $\Ext^i_A(M,L)=   \Hom _{\K(A)}(M, L(-i))$.
\end{para}

\section{Diagonal ideals and DG smoothness}\label{sec20201126a}

In this section, we introduce the notion of diagonal ideals which play an essential role in the proofs of Theorem~\ref{thm2021017a} and Main Theorem.


\begin{para}\label{para20201114d}
Let $\varphi\colon A\to B$ be a homomorphism of DG $R$-algebras such that $B$ is projective as an underlying graded $A$-module. Let $\pi _B\colon B^e \to B$ denote the map defined by $\pi _B(b_1 ^o \otimes b_2) = b_1b_2$.
For all homogeneous elements $b_1, b_2, b'_1, b'_2\in B$ we have
\begin{eqnarray*}
\pi_B((b_1 ^o \otimes b_2)({b'}_1 ^o \otimes {b'}_2))
&=&(-1) ^{|{b'}_1||b_2|+|{b'}_1||b_1|} \pi_B(({b'}_1  {b}_1) ^o \otimes b_2{b'}_2)  \\
&=& (-1) ^{|{b'}_1||b_2|+|{b'}_1||b_1|}({b'}_1 {b}_1) (b_2{b'}_2) \\
&=& ({b}_1 {b}_2) ({b'}_1{b'}_2) \\
&=& \pi_B(b_1 ^o \otimes b_2) \pi _B({b'}_1 ^o \otimes {b'}_2).
\end{eqnarray*}
Hence, $\pi _B$ is an algebra homomorphism.
Also, it is straightforward to check that $\pi _B$ is a chain map.
Therefore,  $\pi_B$  is a homomorphism of DG $R$-algebras.
\end{para}

\begin{defn}\label{defn20201206a}
In the setting of~\ref{para20201114d}, kernel of $\pi_B$ is denoted by $J=J_{B/A}$ and is called the \emph{diagonal ideal} of $\varphi$.\footnote{The definition of diagonal ideals originates in  scheme theory. In fact, if $A \to B$ is a homomorphism of commutative rings, then the kernel of the natural mapping $B \otimes _AB \to B$ is the defining ideal of the diagonal set in the Cartesian product $\spec B \times _{\spec A} \spec B$.
}
\end{defn}

\begin{para}\label{para20201206a}
In~\ref{para20201114d}, since $\pi_B$ is a homomorphism of DG $R$-algebras, $J$ is a DG ideal of $B^e$.
The isomorphism $B^e /J \cong B$ of DG $R$-algebras is also an isomorphism of DG $B^e$-modules. Hence, there is an exact sequence of DG $B^e$-modules:
\begin{equation}\label{eq20201114a}
0 \to J \to  B^e \xra{\pi_B} B  \to 0.
\end{equation}
\end{para}

Next, we define our notion of smoothness for DG algebras.

\begin{defn}\label{defn20210105a}
Let $\varphi\colon A\to B$ be a homomorphism of DG $R$-algebras.
We say that $B$ is \emph{DG quasi-smooth over $A$} (or simply \emph{$\varphi$ is DG quasi-smooth})
if the following conditions are satisfied:
\begin{enumerate}[\rm(i)]
\item
$B$ is free as an underlying graded $A$-module.
\item
The diagonal ideal $J$ has a filtration consisting of DG $B^e$-submodules\footnote{$J^{[\ell]}$ is just a notation for the $\ell$-th DG $B^e$-submodule of $J$ in the sequence. It is not an $\ell$-th power of any kind.}
$$
J =J^{[1]} \supset J^{[2]} \supset J^{[3]} \supset  \cdots \supset J^{[\ell]} \supset J^{[\ell+1]} \supset \cdots
$$
such  that  $J J^{[\ell]} + J^{[\ell]} J \subseteq J^{[\ell+1]}$ for all $\ell \geq 1$, and each element of $J^{[\ell]}$ has degree $\geq \ell$, that is, $\inf (J^{[\ell]})\geq \ell$. This implies that $\bigcap _\ell J^{[\ell]} =(0)$.
\item
For every $\ell \geq 1$, the DG $B$-module $J^{[\ell]}/J^{[\ell+1]}$ is semifree.
\end{enumerate}

We say that $B$ is \emph{DG smooth over $A$} (or simply \emph{$\varphi$ is DG smooth}) if it is DG quasi-smooth over $A$ and for all positive integers $\ell$, the semifree DG $B$-module $J^{[\ell]}/J^{[\ell+1]}$ has a finite semifree basis.\footnote{In case that $A \to B$ is a homomorphism of commutative rings, $B$ is projective over $A$, and  $J/J^2$ is projective over $B$, then $B$ is smooth over $A$ in the sense of scheme theory. In this case, $J/J^2 \cong \Omega _{B/A}$ is the module of K\"{a}hler differentials.}
\end{defn}

\begin{para}
There exist other definitions of smoothness for DG algebras. For instance, a definition given by Kontsevich is found in~\cite{Kont} (alternatively,  in~\cite[Section 18]{Yekutieli}). Also, another version of smoothness for DG algebras is introduced by To\"{e}n and Vezzosi in~\cite{TV} which Shaul~\cite{Shaul} proves is equivalent to Kontsevich's definition. However, our above version of smoothness is new and quite different from any existing definition of smoothness for DG algebras.
\end{para}

\begin{para}\label{para20210130a}
Let $\varphi\colon A\to B$ be a homomorphism of DG $R$-algebras. If $B$ is DG smooth over $A$, then for any integer $\ell \geq 1$,
there is a finite filtration
$$
J = L_0 \supset L_1 \supset L_2 \supset \cdots \supset L _s \supset L_{s+1}= J ^{[\ell]}
$$
of $J$ by its DG $B^e$-submodules, where for each $0 \leq i \leq s$ we have $L_i /L_{i+1} \cong B(-a_i)$ as DG $B^e$-modules, for some positive integer $a_i$.
\end{para}

\begin{para}\label{para20210106a}
We will show in Section~\ref{sec20210107a} that free extensions of divided power DG $R$-algebras and polynomial extensions of DG $R$-algebras are DG quasi-smooth. If these extensions are finite, then we have the DG smooth property; see Corollary~\ref{cor20210105a}.

There are several examples of DG smooth extensions besides free or polynomial extensions.
For instance, as one of the most trivial examples,
let $B = A \langle X  \rangle /(X^2)$, where $|X|$ is even and $d^BX =0$.
If $A$ contains a field of characteristic $2$, then $B$ is DG smooth over $A$ by setting $J^{[2]} =(0)$.
\end{para}

\begin{para}\label{para20201126a}
Let $\varphi\colon A\to B$ be a DG quasi-smooth homomorphism, and use the notation of Definition~\ref{defn20210105a}. Let $N$ be a semifree DG $B$-module. For every positive integer $\ell$, consider the DG $B^e$-module $J/J^{[\ell]}$ as a DG $(B,B)$-bimodule. The tensor product $N \otimes _B  J/J^{[\ell]}$ uses the left DG $B$-module structure of  $J/J^{[\ell]}$ and in this situation, $N \otimes _B  J/J^{[\ell]}$ is a right DG $B$-module by the right $B$-action on $J/J^{[\ell]}$.
\end{para}

The following lemma is useful in the next section.

\begin{lem}\label{lemma for HLtheorem}
Let $\varphi\colon A\to B$ be a DG quasi-smooth homomorphism, and use the notation of Definition~\ref{defn20210105a}. Suppose that $N$ is a semifree DG $B$-module such that $\Ext_B ^i (N,N \otimes _B  J/J^{[\ell]})=0$ for all $i \geq 0$ and some $\ell \geq 1$. Then the natural inclusion $J ^{[\ell]} \hookrightarrow J$ induces an isomorphism  $\Ext_B ^i (N,N \otimes _B  J^{[\ell]}) \cong \Ext_B ^i (N,N \otimes _B  J)$ for all $i\geq 1$.
\end{lem}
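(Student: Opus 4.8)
The plan is a standard dévissage built on a single long exact $\Ext$-sequence. The inclusion $J^{[\ell]}\hookrightarrow J$ sits in the short exact sequence of DG $B^e$-modules
\[
0 \to J^{[\ell]} \to J \to J/J^{[\ell]} \to 0,
\]
and the first thing I would do is apply $N\otimes_B(-)$ to it, using the left DG $B$-module structures as in~\ref{para20201126a}, to obtain a sequence of right DG $B$-modules
\[
0 \to N\otimes_B J^{[\ell]} \to N\otimes_B J \to N\otimes_B J/J^{[\ell]} \to 0
\]
whose left-hand map is induced by the inclusion. The role of the hypothesis that $N$ be semifree is that its underlying graded $B$-module is free, hence flat; so $N\otimes_B(-)$ is exact at the level of underlying graded modules, and the tensored sequence is again short exact as a sequence of DG $B$-modules.

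Next I would apply $\Hom_B(N,-)$. Since the underlying graded module of $N$ is free, hence projective, $\Hom_B(N,-)$ carries the short exact sequence of DG $B$-modules above to a short exact sequence of $\Hom$-complexes. Taking homology and using $\Ext_B^i(N,-)=\HH_{-i}(\Hom_B(N,-))$ from~\ref{para20201124c}, I obtain the long exact sequence
\[
\cdots \to \Ext_B^i(N,N\otimes_B J^{[\ell]}) \to \Ext_B^i(N,N\otimes_B J) \to \Ext_B^i(N,N\otimes_B J/J^{[\ell]}) \to \Ext_B^{i+1}(N,N\otimes_B J^{[\ell]}) \to \cdots,
\]
in which the maps $\Ext_B^i(N,N\otimes_B J^{[\ell]}) \to \Ext_B^i(N,N\otimes_B J)$ are precisely the homomorphisms induced by $J^{[\ell]}\hookrightarrow J$.

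Finally I would feed in the hypothesis $\Ext_B^i(N,N\otimes_B J/J^{[\ell]})=0$ for all $i\ge 0$. For each $i\ge 1$, both neighbours $\Ext_B^{i-1}(N,N\otimes_B J/J^{[\ell]})$ and $\Ext_B^i(N,N\otimes_B J/J^{[\ell]})$ of the map $\Ext_B^i(N,N\otimes_B J^{[\ell]})\to\Ext_B^i(N,N\otimes_B J)$ vanish, so exactness forces this map to be an isomorphism, which is exactly the assertion. I do not anticipate a serious obstacle here: the argument is purely homological. The only points requiring attention are that $\Ext$ is defined homotopy-categorically, so one must verify that the two functors $N\otimes_B(-)$ and $\Hom_B(N,-)$ preserve the relevant short exactness in the DG setting; both do, for the single reason that $N$ is semifree and hence underlying-graded free. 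I would record this observation once and reuse it for both functors.
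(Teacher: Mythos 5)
Your proposal is correct and coincides with the paper's own argument: tensor the short exact sequence $0 \to J^{[\ell]} \to J \to J/J^{[\ell]} \to 0$ with $N$ (exactness on the left holding because $N$ is underlying-graded free), then read off the conclusion from the long exact sequence of $\Ext$ obtained by applying $\Hom_B(N,-)$. Your additional remarks on why both functors preserve exactness in the DG setting are accurate and only make explicit what the paper leaves implicit.
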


\begin{proof}
Applying $N \otimes _B - $ to the short exact sequence
$0 \to J^{[\ell]} \to J \to J/J^{[\ell]} \to 0$
of DG $B^e$-modules, we get an exact sequence
\begin{equation}\label{eq20201121a}
0 \to N \otimes _B J^{[\ell]} \to N \otimes _B J \to N \otimes _B J/J^{[\ell]} \to 0
\end{equation}
of DG $B$-modules, where the injectivity on the left comes from the fact that $N$ is free as an underlying graded $B$-module.
The assertion now follows from the long exact sequence of Ext obtained from applying $\Hom _B ( N , - )$ to~\eqref{eq20201121a}.
\end{proof}

The following result is used in the proof of Theorem~\ref{thm2021017a}.

\begin{thm}\label{Extzero}
Let $\varphi\colon A\to B$ be a DG smooth homomorphism, and use the notation of Definition~\ref{defn20210105a}. Let $N$ be a semifree DG $B$-module with $\Ext_B ^i (N,N)=0$ for all $i \geq 1$. Then for all $i \geq 0$ and all $\ell \geq 1$ we have $\Ext_B ^i (N,N \otimes _B  J^{[\ell]}/J^{[\ell +1]})=0=\Ext_B ^i (N,N \otimes _B  J/J^{[\ell]})$.
\end{thm}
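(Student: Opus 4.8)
The plan is to prove both vanishing statements by a coupled induction on $\ell$, using the smoothness hypothesis to reduce to the graded pieces $J^{[\ell]}/J^{[\ell+1]}$, which by Definition~\ref{defn20210105a} are \emph{finite} semifree DG $B$-modules. The key observation is that DG smoothness (as opposed to mere quasi-smoothness) forces each such quotient to admit a finite filtration whose subquotients are shifts $B(-a)$ of the free module $B$, as recorded in~\ref{para20210130a}. Since $N\otimes_B B(-a)\cong N(-a)$ and $\Ext_B^i(N,N(-a))\cong\Ext_B^{i+a}(N,N)=0$ for all $i\geq 1-a$, in particular for all $i\geq 0$ because $a\geq 1$, the vanishing $\Ext_B^i(N,N\otimes_B(J^{[\ell]}/J^{[\ell+1]}))=0$ for all $i\geq 0$ will follow for every single $\ell$ directly from the hypothesis $\Ext_B^i(N,N)=0$ $(i\geq 1)$, with no induction needed for the first family.

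First I would establish the graded-piece statement. Fix $\ell\geq 1$ and use the finite filtration of $J^{[\ell]}/J^{[\ell+1]}$ by shifts of $B$; tensoring with the graded-free $B$-module $N$ is exact and commutes with shifts, so one gets a finite filtration of $N\otimes_B(J^{[\ell]}/J^{[\ell+1]})$ by copies of $N(-a_i)$ with each $a_i\geq\ell\geq 1$. The long exact sequences in $\Ext_B(N,-)$ attached to the successive short exact sequences then propagate the vanishing: each $\Ext_B^i(N,N(-a_i))=\Ext_B^{i+a_i}(N,N)$ vanishes for all $i\geq 0$ since $i+a_i\geq 1$. A finite induction along the filtration gives $\Ext_B^i(N,N\otimes_B(J^{[\ell]}/J^{[\ell+1]}))=0$ for all $i\geq 0$ and all $\ell\geq 1$.

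Next I would handle the statement for $J/J^{[\ell]}$ by induction on $\ell$. The base case $\ell=1$ is vacuous since $J/J^{[1]}=0$. For the inductive step, the filtration of Definition~\ref{defn20210105a} gives a short exact sequence of DG $B^e$-modules $0\to J^{[\ell]}/J^{[\ell+1]}\to J/J^{[\ell+1]}\to J/J^{[\ell]}\to 0$; tensoring over $B$ with the graded-free module $N$ preserves exactness, yielding a short exact sequence of DG $B$-modules whose outer terms both have $\Ext_B^i(N,-)=0$ for all $i\geq 0$ (the left term by the graded-piece step just proved, the right term by the inductive hypothesis). The long exact sequence in $\Ext_B(N,-)$ then squeezes the middle term, giving $\Ext_B^i(N,N\otimes_B(J/J^{[\ell+1]}))=0$ for all $i\geq 0$, which completes the induction.

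The only genuinely delicate point, and where I would be most careful, is the exactness of $N\otimes_B-$ applied to the short exact sequences of DG $B^e$-modules: one must invoke that $N$ is semifree, hence free as an underlying graded $B$-module, exactly as in the proof of Lemma~\ref{lemma for HLtheorem}, so that tensoring stays exact at the level of graded modules and the resulting sequences of DG $B$-modules are genuinely short exact. A secondary subtlety is bookkeeping the degree shifts: one needs $a_i\geq 1$ (guaranteed since $\inf(J^{[\ell]})\geq\ell\geq 1$) to push $\Ext_B^i(N,N(-a_i))$ into the range $\Ext_B^{\geq 1}(N,N)$ even when $i=0$, which is what makes the graded-piece vanishing hold in the full range $i\geq 0$ rather than only $i\geq 1$. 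No convergence or completeness issues arise because both inductions terminate in finitely many steps for each fixed $\ell$.
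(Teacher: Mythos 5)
Your proof is correct and follows essentially the same route as the paper: both arguments rest on the finite filtration of the relevant quotient by shifts $B(-a_i)$ with $a_i\geq 1$ (so that $\Ext_B^i(N,N(-a_i))\cong\Ext_B^{i+a_i}(N,N)=0$ for all $i\geq 0$), on exactness of $N\otimes_B-$ coming from graded-freeness of $N$, and on the resulting long exact sequences. The only cosmetic difference is that the paper filters $J/J^{[\ell]}$ directly by shifts of $B$ (as in~\ref{para20210130a}) and runs a single induction on the length of the filtration for both families at once, whereas you reach $J/J^{[\ell]}$ in two stages via the extensions $0\to J^{[\ell]}/J^{[\ell+1]}\to J/J^{[\ell+1]}\to J/J^{[\ell]}\to 0$; the two filtrations refine one another, so the arguments agree in substance.
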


\begin{proof}
We treat both of the equalities at the same time. Let $L$ denote $J^{[\ell]}/J^{[\ell +1]}$ or $J/J^{[\ell]}$ with $\ell\geq 1$.
By definition of DG smoothness, there is a finite filtration
$$
L = L_0 \supset L_1 \supset L_2 \supset \cdots \supset L _s \supset L_{s+1}=(0)
$$
of $L$ by its DG $B^e$-submodules, where for each $0 \leq i \leq s$ we have $L_i /L_{i+1} \cong B(-a_i)$ as DG $B^e$-modules, for some positive integer $a_i$.

We now prove by induction on $s$ that  $\Ext_B ^i (N,N \otimes _B  L)=0$ for all $i \geq 0$.
For the base case where $s=0$, we have $L \cong B(-a_0)$. Hence, $N\otimes _B L \cong N(-a_0)$.
Therefore, $\Ext_B ^i (N,N \otimes _B L) \cong \Ext_B ^i (N,N (-a_0)) = \Ext _B ^{i+ a_0} (N, N) = 0$ for all $i \geq 0$.

Assume now that $s \geq 1$. Since $N$ is a semifree DG $B$-module, tensoring the short exact sequence $0 \to L_{1} \to L \to B (-a_0) \to 0$ of DG $B^e$-modules (hence, DG $B$-modules) by $N$, we get a short exact sequence
\begin{equation}\label{eq20201116a}
0 \to N\otimes_BL_{1} \to N\otimes_BL \to N\otimes_BB (-a_0) \to 0
\end{equation}
of DG $B$-modules (hence, DG $B^e$-modules via $\pi_B$).
By inductive hypothesis we have $\Ext_B ^i (N,N \otimes _B L_{1})=0$ for all $i\geq 0$. Also, $\Ext_B ^i (N,N \otimes _B B(-a_0)) = 0$ for all $i \geq 0$ by the base case $s=0$.
It follows from the long exact sequence of cohomology modules obtained from~\eqref{eq20201116a} that $\Ext_B ^i (N,N \otimes _B  L) =0$ for all $i \geq 0$.
\end{proof}

The next result is also crucial in the proof of Theorem~\ref{thm2021017a}.

\begin{thm}\label{HLtheorem}
Let $\varphi\colon A\to B$ be a DG quasi-smooth homomorphism, and use the notation of Definition~\ref{defn20210105a}. Let $N$ be a bounded below semifree DG $B$-module with $\Ext_B ^i (N,N \otimes _B  J/J^{[\ell]})=0$ for all $i \geq 0$ and all $\ell\geq 1$. Then $\Ext_B ^i (N,N \otimes _B  J)=0$ for all $i\geq 1$.
\end{thm}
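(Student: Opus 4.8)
The plan is to identify $N\otimes_B J$ with the homotopy limit of the tower $\{N\otimes_B J/J^{[\ell]}\}_{\ell\geq 1}$ and then transport the vanishing hypothesis across $\Hom_B(N,-)$ by means of the Milnor $\lim$--$\lim^1$ sequence attached to a homotopy limit; this is why the result is a ``homotopy limit theorem.'' The underlying idea is that the diagonal ideal $J$ is reconstructed from its finite-level quotients $J/J^{[\ell]}$ as a (homotopy) limit, and that $\Ext_B^i(N,-)$ is insensitive to passing to this limit precisely because $N$ is bounded below, which forces the relevant tower to stabilize in each degree.

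First I would record the degreewise stabilization. Since $N$ is free as an underlying graded $B$-module, tensoring the exact sequences $0\to J^{[\ell]}\to J\to J/J^{[\ell]}\to 0$ by $N$ yields, exactly as in the proof of Lemma~\ref{lemma for HLtheorem}, short exact sequences $0\to N\otimes_B J^{[\ell]}\to N\otimes_B J\to N\otimes_B J/J^{[\ell]}\to 0$ of DG $B$-modules. Because $\inf(J^{[\ell]})\geq \ell$ by Definition~\ref{defn20210105a}(ii) and $N$ is bounded below, one has $\inf(N\otimes_B J^{[\ell]})\geq \inf(N)+\ell$, so in each fixed degree $d$ the module $(N\otimes_B J^{[\ell]})_d$ vanishes once $\ell>d-\inf(N)$. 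Hence the transition maps of the tower $\{N\otimes_B J/J^{[\ell]}\}_\ell$ are eventually the identity in every degree; the tower is Mittag--Leffler, the natural map $N\otimes_B J\to \lim_\ell N\otimes_B J/J^{[\ell]}$ is an isomorphism, and consequently $N\otimes_B J\simeq \holim_\ell N\otimes_B J/J^{[\ell]}$ using the homotopy-limit machinery of Section~\ref{sec20201126b}.

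Next I would apply $\Hom_B(N,-)$. Since $N$ is semifree, $\Hom_B(N,-)$ preserves quasi-isomorphisms and products, hence commutes with homotopy limits of towers, giving $\Hom_B(N,N\otimes_B J)\simeq \holim_\ell \Hom_B(N,N\otimes_B J/J^{[\ell]})$. Writing $C_\ell=\Hom_B(N,N\otimes_B J/J^{[\ell]})$, the Milnor exact sequence for the homotopy limit of a tower yields, for every $n$,
$$0\to {\lim_\ell}^{1}\, \HH_{n+1}(C_\ell)\to \HH_n\big(\holim_\ell C_\ell\big)\to \lim_\ell \HH_n(C_\ell)\to 0.$$
Taking $n=-i$ with $i\geq 1$ and recalling that $\HH_{-j}(C_\ell)=\Ext_B^{j}(N,N\otimes_B J/J^{[\ell]})$, the two outer terms involve $\Ext_B^{i}$ and $\Ext_B^{i-1}$, whose indices $i$ and $i-1$ are both $\geq 0$ because $i\geq 1$; these vanish for all $\ell$ by hypothesis. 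Therefore $\Ext_B^{i}(N,N\otimes_B J)=\HH_{-i}\big(\Hom_B(N,N\otimes_B J)\big)=0$ for all $i\geq 1$, which is the assertion.

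The main obstacle, and the step that genuinely uses both hypotheses, is the identification $N\otimes_B J\simeq \holim_\ell N\otimes_B J/J^{[\ell]}$: it relies simultaneously on the degree bound $\inf(J^{[\ell]})\geq \ell$ built into DG quasi-smoothness and on $N$ being bounded below, without which $\inf(N\otimes_B J^{[\ell]})$ need not tend to infinity and the tower would fail to stabilize degreewise. The remaining points are more technical: verifying that $\Hom_B(N,-)$ commutes with the homotopy limit (where semifreeness of $N$ enters) and keeping the index bookkeeping in the Milnor sequence so that both $\Ext$ indices stay nonnegative, so that the hypothesis stated ``for all $i\geq 0$'' is exactly what is needed.
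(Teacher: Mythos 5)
Your argument is correct, and it reaches the conclusion by a genuinely different organization of the same homotopy-limit idea. The paper first uses Lemma~\ref{lemma for HLtheorem} to convert the hypothesis into isomorphisms $\Ext_B^i(N,N\otimes_B J^{[\ell]})\cong\Ext_B^i(N,N\otimes_B J)$ for $i\geq 1$, and then applies Theorem~\ref{limit} to the \emph{submodule} tower $\{N\otimes_B J^{[\ell]}\}_\ell$: since $\inf(N\otimes_B J^{[\ell]})\to\infty$, the homotopy limit of that tower is acyclic (Lemma~\ref{lem20201122a}), so the common value of these Ext groups is zero. You work instead with the \emph{quotient} tower $\{N\otimes_B J/J^{[\ell]}\}_\ell$, identify its homotopy limit with $N\otimes_B J$ itself via the same degreewise stabilization (seen from the other side of the short exact sequence $0\to J^{[\ell]}\to J\to J/J^{[\ell]}\to 0$), and extract the vanishing from the Milnor $\lim$--$\lim^1$ sequence. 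The two routes are essentially the two rotations of one triangle, but yours buys two things: it dispenses with Lemma~\ref{lemma for HLtheorem} altogether, and it makes explicit where the hypothesis at $i=0$ is consumed, namely in the $\lim^1$ term involving $\Ext_B^{i-1}$ when $i=1$ (in the paper this is hidden in the index restriction of Lemma~\ref{lemma for HLtheorem}). What the paper's organization buys is that the homotopy-limit input is quarantined in the self-contained Theorem~\ref{limit}, whose statement never mentions $\lim^1$. If you write your version up, make two points explicit: the transition maps of the quotient tower are degreewise surjective, which is what guarantees that the levelwise inverse limit agrees with the homotopy limit in the derived category; and $\Hom_B(N,-)$ must be read as the Hom complex, so that semifreeness of $N$ gives both preservation of quasi-isomorphisms and compatibility with the product/triangle defining the homotopy limit.
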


The proof of this result needs the machinery of homotopy limits, which we discuss in the next section. We give the proof of this theorem in~\ref{para20201122d} below.

\section{Homotopy limits and proof of Theorem~\ref{HLtheorem}}\label{sec20201126b}

The entire section is devoted to the proof of Theorem~\ref{HLtheorem}.
The notion of homotopy limits, which we define in~\ref{para20201122b}, plays an essential role in the proof of the following result which is a key to the proof of Theorem~\ref{HLtheorem}.

\begin{thm}\label{limit}
Let $M$ and $N$ be DG $A$-modules with $M$ bounded below and $N$ semifree. Assume that there is a descending sequence
$$
M = M^0 \supseteq  M^1\supseteq  M^2 \supseteq \cdots \supseteq M ^{\ell} \supseteq M ^{\ell +1} \supseteq \cdots
$$
of DG $A$-submodules of $M$ that satisfies the following conditions:
\begin{enumerate}[\rm(1)]
\item  $\lim_{\ell\to \infty}\inf (M^{\ell})=\infty$; and
\item  there is an integer $k$ such that  the natural maps $M ^{\ell} \hookrightarrow M$ induce isomorphisms
$\Ext_A ^k ( N, M^{\ell}) \cong \Ext_A ^k ( N, M)$  for all $\ell \geq 1$.
\end{enumerate}
Then $\Ext_A ^k ( N, M) = 0$.
\end{thm}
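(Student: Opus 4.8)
The plan is to view the descending sequence $\{M^\ell\}$ as an inverse system whose structure maps are the inclusions $M^{\ell+1}\hookrightarrow M^\ell$, to prove that its homotopy limit is contractible, and then to transport this vanishing through $N$ via the Milnor $\lim$--$\lim^1$ exact sequence. Hypothesis (2) will identify $\lim_\ell \Ext_A^k(N,M^\ell)$ with $\Ext_A^k(N,M)$, while contractibility of the homotopy limit will force that inverse limit to be zero.

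First I would extract from hypothesis (2) a statement about the tower of $\Ext$'s. Since each inclusion $M^{\ell+1}\hookrightarrow M$ factors as $M^{\ell+1}\hookrightarrow M^\ell\hookrightarrow M$, functoriality of $\Ext_A^k(N,-)$ gives a commutative triangle in which two of the three maps (those induced by $M^{\ell+1}\hookrightarrow M$ and $M^\ell\hookrightarrow M$) are isomorphisms by assumption; hence every transition map $\Ext_A^k(N,M^{\ell+1})\to\Ext_A^k(N,M^\ell)$ is an isomorphism. Thus the tower $\{\Ext_A^k(N,M^\ell)\}_\ell$ is constant up to isomorphism, and in particular $\lim_\ell \Ext_A^k(N,M^\ell)\cong \Ext_A^k(N,M^0)=\Ext_A^k(N,M)$.

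The heart of the argument is to show $\holim_\ell M^\ell\simeq 0$. Recall that the homotopy limit sits in a triangle $\holim_\ell M^\ell\to \prod_\ell M^\ell\xrightarrow{\,1-s\,}\prod_\ell M^\ell$, where $s$ is the shift endomorphism induced by the inclusions. I would prove that $1-s$ is an isomorphism of DG $A$-modules by checking bijectivity in each degree. Injectivity is immediate: a fixed point of $s$ is an element lying in every $M^\ell$, hence in $\bigcap_\ell M^\ell=0$, where this intersection vanishes because $\inf(M^\ell)\to\infty$ by hypothesis (1). For surjectivity, given $(y_\ell)_\ell$ I would solve $(1-s)\big((x_\ell)_\ell\big)=(y_\ell)_\ell$ by the telescoping formula $x_\ell=\sum_{j\geq\ell}y_j$, where each $y_j\in M^j\subseteq M^\ell$ for $j\geq\ell$; hypothesis (1) ensures that in any fixed degree all but finitely many $y_j$ vanish, so the sum is a finite sum and defines a genuine element of $M^\ell$ (here I use that each $M^\ell\subseteq M$ is bounded below). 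Consequently $1-s$ is a bijective chain map, its cone is contractible, and $\holim_\ell M^\ell\simeq 0$. I expect this finiteness step to be the main obstacle: everything hinges on using hypothesis (1) to render the infinite telescoping sum degreewise finite.

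Finally I would feed this vanishing back through $N$. Because $N$ is semifree, $\Hom_A(N,-)$ computes $\Ext$ and preserves both products and the homotopy-limit triangle, so it sends $\holim_\ell M^\ell$ to $\holim_\ell \Hom_A(N,M^\ell)$; since $\holim_\ell M^\ell\simeq 0$, we get $\holim_\ell \Hom_A(N,M^\ell)\simeq 0$. The Milnor exact sequence computing the cohomology of a homotopy limit then exhibits $\lim_\ell \Ext_A^k(N,M^\ell)$ as a quotient of $\Ext_A^k\big(N,\holim_\ell M^\ell\big)=0$, so $\lim_\ell \Ext_A^k(N,M^\ell)=0$. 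Combining this with the identification $\lim_\ell \Ext_A^k(N,M^\ell)\cong\Ext_A^k(N,M)$ from the first step yields $\Ext_A^k(N,M)=0$, as desired.
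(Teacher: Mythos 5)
Your proof is correct and takes essentially the same route as the paper's: both form the homotopy limit triangle $\holim M^{\ell}\to\prod_{\ell}M^{\ell}\xrightarrow{1-s}\prod_{\ell}M^{\ell}$, use hypothesis (1) to make the products degreewise finite so that $1-s$ becomes invertible (you invert it on the complexes themselves via the telescoping sum, while the paper only inverts it on homology), and then use semifreeness of $N$ together with hypothesis (2) to transport the vanishing of the homotopy limit to $\Ext_A^k(N,M)$. The only cosmetic difference is that you make the last step explicit via the Milnor $\lim$--$\lim^1$ sequence, where the paper instead chains the isomorphisms $\Hom_{\K(A)}(N,M)\cong\holim\Hom_{\K(A)}(N,M^{\ell})\cong\Hom_{\K(A)}(N,\holim M^{\ell})$.
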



By~\ref{para20201124c}, Theorem~\ref{limit} can be restated as follows. We prove this result in~\ref{para20201122c}.

\begin{thm}\label{limit2}
Let $M$ and $N$ be DG $A$-modules with $M$ bounded below and $N$ semifree. Assume that there is a descending sequence
$$M = M^0 \supseteq  M^1\supseteq  M^2 \supseteq \cdots \supseteq M ^{\ell} \supseteq M ^{\ell +1} \supseteq \cdots$$
of DG $A$-submodules of $M$ that satisfies the following conditions:
\begin{enumerate}[\rm(1)]
\item  $\lim_{\ell\to \infty}\inf (M^{\ell})=\infty$; and
\item  for all positive integers $\ell$, the natural maps $M ^{\ell} \hookrightarrow M$ induce isomorphisms
$\Hom _{\K(A)}( N, M^{\ell}) \cong \Hom _{\K(A)}( N, M)$.
\end{enumerate}
Then $\Hom _{\K(A)} ( N, M) = 0$.
\end{thm}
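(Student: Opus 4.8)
The plan is to realize the tower $\{M^\ell\}$ as a homotopy limit and exploit condition (1) to show that this homotopy limit is contractible. Concretely, I would form the direct product $P=\prod_{\ell\geq 0}M^\ell$ in $\C(A)$ and consider the DG $A$-module endomorphism $1_P-\sigma$, where $\sigma\colon P\to P$ is the shift whose $\ell$-th component is the projection onto the $(\ell+1)$-st factor followed by the inclusion $M^{\ell+1}\hookrightarrow M^\ell$; up to a shift, the mapping cone of $1_P-\sigma$ is the homotopy limit $\holim_\ell M^\ell$. It therefore suffices to prove that $1_P-\sigma$ is an isomorphism: then $\holim_\ell M^\ell$ is zero in $\K(A)$, and applying $\Hom_{\K(A)}(N,-)$ will let me compute $\lim_\ell \Hom_{\K(A)}(N,M^\ell)$ in two different ways and force the desired vanishing. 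A pleasant feature of this route is that it never refers to the quotients $M/M^\ell$, so I avoid having to worry about whether the sequences $0\to M^\ell\to M\to M/M^\ell\to 0$ are degreewise split in $\C(A)$.

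The technical heart, and the step I expect to be the main obstacle, is showing that $1_P-\sigma$ is an isomorphism in $\C(A)$; this is exactly where hypothesis (1) enters. I would check bijectivity one homogeneous degree at a time. Fixing a degree $j$, condition (1) supplies an $L$ with $(M^\ell)_j=0$ for all $\ell\geq L$, so that $P_j=\prod_\ell (M^\ell)_j$ is really the finite product $\bigoplus_{\ell<L}(M^\ell)_j$. On such a finite product the equation $(1-\sigma)(x)=y$ can be solved uniquely by descending recursion on $\ell$: one sets $x_\ell=0$ for $\ell\geq L$ and then $x_\ell=y_\ell+\iota(x_{\ell+1})$, which simultaneously yields injectivity and surjectivity. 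The crucial subtlety is that this telescoping would diverge for a tower that is not eventually zero in each degree, which is precisely the situation condition (1) rules out; so it is essential to record that all the relevant sums are finite. Since $\sigma$ is assembled from the chain inclusions $M^{\ell+1}\hookrightarrow M^\ell$, it is a chain map, and a degreewise bijective chain map is an isomorphism in $\C(A)$, hence in $\K(A)$.

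With $1_P-\sigma$ an isomorphism, I would apply the functor $\Hom_{\K(A)}(N,-)$. Because this functor commutes with products, that is, $\Hom_{\C(A)}(N,P)=\prod_\ell \Hom_{\C(A)}(N,M^\ell)$ and a chain map into a product is null-homotopic if and only if each of its components is, the induced map is $1-\sigma_*$ on $\prod_\ell \Hom_{\K(A)}(N,M^\ell)$, where $\sigma_*$ is the shift coming from the transition maps $\iota_{\ell*}$. This map is an isomorphism, so its kernel, which is $\lim_\ell \Hom_{\K(A)}(N,M^\ell)$, is zero. Finally I bring in hypothesis (2): writing $j_\ell\colon M^\ell\hookrightarrow M$ for the inclusions, the relation $j_{\ell+1}=j_\ell\circ\iota_\ell$ shows that the isomorphisms $j_{\ell*}$ identify the inverse system $\{\Hom_{\K(A)}(N,M^\ell),\iota_{\ell*}\}$ with the constant system at $\Hom_{\K(A)}(N,M)$. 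Hence $\lim_\ell \Hom_{\K(A)}(N,M^\ell)\cong \Hom_{\K(A)}(N,M)$, and comparing the two computations of this limit gives $\Hom_{\K(A)}(N,M)=0$, as desired.
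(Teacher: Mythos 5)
Your proof is correct, and it follows the same basic strategy as the paper's --- form the product $P=\prod_\ell M^\ell$ and study the telescope map $\varphi=1_P-\sigma$ --- but the key step is executed differently. The paper defines $\holim M^\ell$ as the fiber of the triangle $L\to P\xra{\varphi}P\to\shift L$ and proves only that $\HH(\varphi)$ is an isomorphism: for a fixed homological degree $i$, condition (1) forces $\HH_i(M^\ell)=0$ for almost all $\ell$, so $\prod_\ell\HH_i(M^\ell)$ is a finite product on which the telescope is invertible; hence $L$ is acyclic, and the conclusion is then extracted by exchanging $\Hom_{\K(A)}(N,-)$ with $\holim$ (using that $N$ is semifree, hence $\K$-projective) together with condition (2). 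You instead prove the stronger statement that $\varphi$ is already an isomorphism in $\C(A)$, by applying the same finiteness observation degreewise to $P_j$ rather than to homology ($\sigma$ is nilpotent in each degree, so $1-\sigma$ is invertible there); this makes the cone split exact and lets you finish with an entirely elementary computation, identifying $\ker(1-\sigma_*)$ on $\prod_\ell\Hom_{\K(A)}(N,M^\ell)$ with the inverse limit of the system, which condition (2) identifies with $\Hom_{\K(A)}(N,M)$. What your route buys is the avoidance of the long exact sequence in homology and of the isomorphism $\Hom_{\K(A)}(N,\holim M^\ell)\cong\holim\Hom_{\K(A)}(N,M^\ell)$, whose formulation in the paper requires some care about whether one is working with Hom-complexes or morphism groups; what the paper's route buys is a reusable acyclicity statement about the homotopy limit itself. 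Both arguments use condition (1) in exactly the same essential way, namely to reduce an infinite product to a finite one on which $1-\sigma$ is unipotent, so your proof is a legitimate and slightly more self-contained variant rather than a fundamentally new idea.
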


\begin{para}\label{para20201122v}
For a family $\{ M ^{\ell} \mid \ell \in \mathbb{N} \}$  of countably many DG $A$-modules,
the \emph{product} (or the \emph{direct product}) $P = \prod _{\ell \in \mathbb{N}} M^{\ell}$ in $\C(A)$ is constructed as follows:
the DG $A$-module $P$ has a $\mathbb{Z}$-graded structure
$P_i = \prod _{\ell \in \mathbb{N}} \left(M ^{\ell}\right)_i$ for all $i\in \mathbb{Z}$ with the differential that is given by the formula
$$\partial_i ^P \left( (m^{\ell})_{\ell \in \mathbb{N}} \right) = \left( \partial_i ^{M^{\ell}}(m^{\ell})  \right) _{\ell \in \mathbb{N}}$$
for all $(m^{\ell})_{\ell \in \mathbb{N}} \in P_i$.
By definition, we have
$$\Hom _{\C(A)} ( - , P) \cong \prod _{\ell \in \mathbb{N}} \Hom _{\C(A)} ( - , M^{\ell})$$
as functors on $\C(A)$. 
It can be seen that $P$ is also a product in $\K(A)$. Hence,
\begin{equation}\label{product}
\Hom _{\K(A)} ( - , P) \cong \prod _{\ell \in \mathbb{N}} \Hom _{\K(A)} ( - , M^{\ell})
\end{equation}
as functors on $\K(A)$.
\end{para}

Next, we define the notion of homotopy limits; references on this include~\cite{avramov:dgha, keller:ddgc}.

\begin{para}\label{para20201122b}
Assume that  $\{M ^{\ell} \mid \ell \in \mathbb{N}\}$ is a family of DG $A$-modules such that
$$ M^1\supseteq  M^2 \supseteq \cdots \supseteq M ^{\ell} \supseteq M ^{\ell +1} \supseteq \cdots.$$
Then, the \emph{homotopy limit}  $L= \holim  M^{\ell}$  is defined by the triangle in $\K(A)$
\begin{equation}\label{holimit}
L \to  P \xra{\varphi} P \to \shift L
\end{equation}
where $P = \prod _{\ell \in \mathbb{N}} M^{\ell}$ is the product introduced in~\ref{para20201122v} and $\varphi$ is defined by
$$
\varphi \left( (m^{\ell} \right) _{\ell \in \mathbb{N}} ) = \left( m^{\ell} - m^{\ell +1} \right)_{\ell \in \mathbb{N}}.
$$
Note that~\eqref{holimit} is a triangle in the derived category $\D(A)$ as well.

Let  $N$ be a semifree DG $A$-module.
Since $N$ is $\K(A)$-projective, we note that  $\Hom _{\D(A)} (N, - ) = \Hom _{\K(A)} (N, - )$ on the object set of $\C(A)$.
Applying the functor $\Hom _{\K(A)} (N, - )$ to the triangle~\eqref{holimit}, by~\ref{para20201122v} we have a triangle
{\small
$$
\Hom _{\K(A)} (N, L) \to  \prod _{\ell \in \mathbb{N}} \Hom _{\K(A)} (N, M^{\ell} ) \to \prod _{\ell \in \mathbb{N}} \Hom _{\K(A)} (N, M^{\ell} ) \to \shift \Hom _{\K(A)} (N, L)
$$}
in $\D(R)$.
Therefore, $\Hom _{\K(A)} (N , \holim M^{\ell}) \cong \holim \Hom _{\K(A)} (N ,  M^{\ell})$ in $\D(R)$.
\end{para}

\begin{lem}\label{lem20201122a}
Under the assumptions of Theorem~\ref{limit2} we have $\HH (\holim M^{\ell}) =0$, that is, $\holim M^{\ell}$ is zero in the derived category $\D(A)$.
\end{lem}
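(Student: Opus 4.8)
The plan is to show that the map $\varphi\colon P\to P$ in the defining triangle~\eqref{holimit} of $L=\holim M^{\ell}$ is actually an \emph{isomorphism of DG $A$-modules}; once this is established, $L$ is the fiber of an isomorphism and hence zero in $\D(A)$, which is exactly the assertion $\HH(\holim M^{\ell})=0$. I expect that only hypothesis (1) of Theorem~\ref{limit2}, namely $\lim_{\ell\to\infty}\inf(M^{\ell})=\infty$, will be needed.

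First I would extract the degree-wise content of hypothesis (1). Because the submodules are nested, $\inf(M^{\ell})$ is nondecreasing, and as it tends to $\infty$ there is, for every fixed degree $i$, an index $\ell_0=\ell_0(i)$ with $(M^{\ell})_i=0$ for all $\ell\geq \ell_0$. Thus in each degree the product $P_i=\prod_{\ell}(M^{\ell})_i$ is genuinely \emph{finite} (only the terms with $\ell<\ell_0(i)$ survive), and in particular $\bigcap_{\ell}M^{\ell}=0$.

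Next I would check that $\varphi$, given in each degree by $(m^{\ell})_{\ell}\mapsto (m^{\ell}-m^{\ell+1})_{\ell}$, is bijective. For injectivity: if $\varphi((m^{\ell})_{\ell})=0$ in degree $i$, then $m^{\ell}=m^{\ell+1}$ for all $\ell$, and starting from $m^{\ell_0}=0$ and descending forces every component to vanish (equivalently, the common value lies in $\bigcap_{\ell}M^{\ell}=0$). For surjectivity: given $(n^{\ell})_{\ell}\in P_i$, I would solve the telescoping system by setting $m^{\ell}=\sum_{k\geq \ell}n^{k}$; this is a \emph{finite} sum lying in $(M^{\ell})_i$, since $n^{k}\in (M^{k})_i\subseteq (M^{\ell})_i$ for $k\geq \ell$ while $n^{k}=0$ for $k\geq \ell_0(i)$, and then $m^{\ell}-m^{\ell+1}=n^{\ell}$. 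Since $\varphi$ is assembled from the identity and the chain inclusions $M^{\ell+1}\hookrightarrow M^{\ell}$, it commutes with the differential, so it is an isomorphism of DG $A$-modules.

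Finally, being a chain isomorphism, $\varphi$ is a quasi-isomorphism, hence an isomorphism $P\xrightarrow{\simeq}P$ in $\D(A)$; its fiber $L$ in~\eqref{holimit} is therefore zero in $\D(A)$, i.e. $\HH(\holim M^{\ell})=0$. (Equivalently, one may feed the isomorphisms $\varphi_*$ into the long exact homology sequence of the triangle and conclude $\HH_i(L)=0$ for every $i$ by exactness.) The one step that genuinely needs the hypothesis—and the crux of the whole argument—is the surjectivity of $\varphi$: it is the degree-wise finiteness coming from $\inf(M^{\ell})\to\infty$ that makes the telescoping sum $\sum_{k\geq\ell}n^{k}$ finite and hence $\varphi$ invertible; without it, $1-\mathrm{shift}$ need not be surjective.
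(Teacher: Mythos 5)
Your proof is correct, and it rests on exactly the same finiteness phenomenon as the paper's: condition (1) forces the relevant products to be finite in each degree, so that the map $1-\mathrm{shift}$ is invertible by the telescoping sum. The difference is where this is applied. You work at the chain level in $\C(A)$, showing that $\varphi\colon P\to P$ in~\eqref{holimit} is already an isomorphism of DG $A$-modules because $(M^{\ell})_i=0$ for $\ell\gg 0$ in each fixed degree $i$; the paper instead passes to homology first, observes that $\HH_i(M^{\ell})=0$ for almost all $\ell$ so that $\prod_{\ell}\HH_i(M^{\ell})$ is a finite product on which $\HH(\varphi)$ is an isomorphism, and then reads off $\HH_i(L)=0$ from the long exact sequence of the triangle. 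Your version buys a marginally stronger conclusion --- $\varphi$ is invertible in $\C(A)$, hence $\holim M^{\ell}$ is zero already in $\K(A)$, not merely acyclic --- and it avoids having to know that homology commutes with the product $P=\prod_{\ell}M^{\ell}$; the paper's version is slightly shorter since injectivity and surjectivity of $\HH(\varphi)$ on a finite product are immediate. Both are complete proofs of the lemma, and your observation that only hypothesis (1) is used matches the paper, whose proof likewise never invokes condition (2).
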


\begin{proof}
Let $L = \holim M^{\ell}$.
The triangle~\eqref{holimit} gives the long exact sequence
$$
\cdots \to \HH_i(L)  \to  \prod _{\ell \in \mathbb{N}} \HH_i(M^{\ell} ) \xra{\HH(\varphi)} \prod _{\ell \in \mathbb{N}} \HH_i(M^{\ell})  \to \HH_{i-1} (L) \to \cdots
$$
of homology modules. Fix an integer $i$ and note that  $\HH_i (M^{\ell}) =0$ if  $\inf (M^{\ell}) > i$.
Hence, by Condition (1) we have $\HH_i(M^{\ell}) =0$ for almost all $\ell \in \mathbb{N}$. Thus, the product $\prod _{\ell \in \mathbb{N}} \HH_i(M^{\ell} )$ is a product of finite number of $R$-modules.
Hence, $\HH(\varphi)$ is an isomorphism and therefore, $\HH_i(L)=0$ for all $i \in \mathbb{Z}$, as desired.
\end{proof}

\begin{para}{\emph{Proof of Theorem~\ref{limit2}.}}\label{para20201122c}
Since $N$ is semifree, by~\ref{para20201122b} and Condition (2)
\begin{align*}
\Hom _{\K(A)} (N ,  M)&\cong \holim \Hom _{\K(A)} (N ,  M^{\ell})\\&\cong \Hom _{\K(A)} (N , \holim M^{\ell})\\ &\cong \Hom _{\D(A)} (N , \holim M^{\ell}).
\end{align*}
Now the assertion follows from Lemma~\ref{lem20201122a}.\qed
\end{para}


\begin{para}{\emph{Proof of Theorem~\ref{HLtheorem}.}}\label{para20201122d}
It follows from Lemma~\ref{lemma for HLtheorem} that $\Ext_B ^i (N,N \otimes _B  J^{[\ell]}) \cong \Ext_B ^i (N,N \otimes _B  J)$ for all $i\geq 1$ and all $\ell\geq 1$.
Note that $\{\inf \left(N \otimes _B J ^{[\ell]}\right)\mid \ell\in\mathbb{N}\}$ is an increasing sequence of integers that diverges to $\infty$. Now, the assertion follows from Theorem~\ref{limit}.\qed
\end{para}

\section{Na\"ive liftability and proof of Theorem~\ref{thm2021017a}}\label{sec20201126c}

The notion of na\"ive liftability was introduced by the authors in~\cite{NOY} along simple free extensions of DG algebras. It is shown in~\cite[Theorem 6.8]{NOY} that along such extension $A\to A\langle X\rangle$ of DG algebras, weak liftability in the sense of~\cite[Definition 5.1]{NOY} (when $|X|$ is odd) and liftability (when $|X|$ is even) of DG modules are equivalent to na\"ive liftability. In this section, we study the na\"ive lifting property of DG modules in a more general setting using the diagonal ideal. We give the proof of Theorem~\ref{thm2021017a} in~\ref{para20201125a}.

\begin{para}\label{para20201113a}
Let $A\to B$ be a homomorphism of DG $R$-algebras such that $B$ is free as an underlying graded $A$-module. Let $(N,\partial^N)$ be a semifree DG $B$-module, and let $N |_A$ denote $N$ regarded as a DG $A$-module via $A\to B$.
Since $B$ is free as an underlying graded $A$-module, $N |_A$  is a semifree DG $A$-module.
Note that  $(N |_A \otimes _A B,\partial)$  is a DG $B$-module with
$\partial  (n \otimes b) = \partial ^N (n) \otimes b + (-1)^{|n|} n \otimes d^B (b)$ for all homogeneous elements $n \in N$ and $b \in B$.
Since $N |_A$ is a semifree DG $A$-module, $N |_A \otimes _A B$ is a semifree DG $B$-module, and we have a (right) DG $B$-module epimorphism
$\pi _N\colon N |_A \otimes _A B \to N$
defined by $\pi_N(n \otimes b)=nb$.
\end{para}

\begin{prop}\label{para20201114f}
Let $A\to B$ be a homomorphism of DG $R$-algebras such that $B$ is free as an underlying graded $A$-module. Every semifree DG $B$-module $N$ fits into the following short exact sequence of DG $B$-modules:
\begin{equation}\label{basic SES}
0 \to N \otimes _BJ \to  N |_A \otimes _A B  \xra{\pi_N} N  \to 0.
\end{equation}
\end{prop}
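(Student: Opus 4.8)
The plan is to derive~\eqref{basic SES} by applying the functor $N\otimes_B-$ to the short exact sequence~\eqref{eq20201114a} of DG $B^e$-modules
\[
0 \to J \to B^e \xra{\pi_B} B \to 0,
\]
where, following the convention of~\ref{para20201126a}, the tensor products are formed using the \emph{left} DG $B$-module structures of $J$, $B^e$, and $B$, and the results carry the right DG $B$-module structures coming from the right $B$-actions.

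First I would argue that tensoring preserves short exactness. Since $N$ is semifree it is free as an underlying graded $B$-module, so $N\otimes_B-$ is exact on underlying graded modules; hence
\[
0 \to N\otimes_B J \to N\otimes_B B^e \to N\otimes_B B \to 0
\]
is a short exact sequence of DG $B$-modules. (This is the same freeness argument used for the injectivity in the proof of Lemma~\ref{lemma for HLtheorem}.)

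Next I would identify the three terms. The right-hand term is the canonical isomorphism $N\otimes_B B\cong N$ of right DG $B$-modules. For the middle term I claim a natural isomorphism $N\otimes_B B^e\cong N|_A\otimes_A B$ of right DG $B$-modules, given up to sign by $n\otimes(b_1^o\otimes b_2)\mapsto nb_1\otimes b_2$ with inverse $n\otimes b\mapsto n\otimes(1^o\otimes b)$. To see it is well defined and bijective, recall from~\ref{para20201206d} that $B^e=B^o\otimes_A B$ is free as a left $B^o$-module (hence as a left DG $B$-module via $b\mapsto b^o\otimes 1$) on the monomial basis $\{1^o\otimes w\}$, where $w$ runs over an $A$-basis of $B$; under $N\otimes_B-$ this basis is carried bijectively to the corresponding $A$-basis of $N|_A\otimes_A B$. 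The map respects differentials because $d^{B^e}(1^o\otimes b)=1^o\otimes d^B(b)$ (see~\ref{para20201114e}) matches the differential of $N|_A\otimes_A B$ described in~\ref{para20201113a}, and the two right $B$-actions agree since both come from the tensor factor $B$. The leftmost term $N\otimes_B J$ is then simply the kernel, with its inclusion into $N|_A\otimes_A B$ induced functorially by $J\hookrightarrow B^e$.

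Finally I would verify that, under these identifications, the induced map $\mathrm{id}_N\otimes\pi_B$ becomes $\pi_N$. Tracing a generator gives $n\otimes(b_1^o\otimes b_2)\mapsto n\otimes b_1b_2\mapsto nb_1b_2$ along the right-hand identifications, while the same element maps to $nb_1\otimes b_2\in N|_A\otimes_A B$ and then $\pi_N(nb_1\otimes b_2)=nb_1b_2$; the two agree, yielding exactly~\eqref{basic SES}. I expect the only genuinely non-formal step to be the middle identification $N\otimes_B B^e\cong N|_A\otimes_A B$: checking that it is a chain isomorphism of right DG $B$-modules requires bookkeeping of the Koszul signs coming from the multiplication on $B^e$ in~\ref{para20201114e} and from the left/right actions in~\ref{para20201114a}. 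Once this isomorphism is in place, the exactness and the identification of the quotient map with $\pi_N$ are routine.
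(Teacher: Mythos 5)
Your proposal is correct and follows essentially the same route as the paper: apply $N\otimes_B-$ to the sequence~\eqref{eq20201114a}, use freeness of the underlying graded $B$-module $N$ for left-exactness, identify $N\otimes_B B^e\cong N|_A\otimes_A B$ via $n\otimes(b_1^o\otimes b_2)\mapsto nb_1\otimes b_2$ (the paper obtains this from $N\otimes_B B^o\cong N$ and associativity rather than from the monomial basis, but it is the same map, and in fact no sign is needed), and check that $\id_N\otimes\pi_B$ corresponds to $\pi_N$.
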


\begin{proof}
Since the left DG $B$-module $B^o$  is the right DG $B$-module $B$,
we have  an isomorphism $N \otimes _B B ^o \cong N$ of right DG $A$-modules such that $x\otimes b^o\mapsto xb$ for all $x\in N$ and $b\in B$.
Hence, there are isomorphisms
$N \otimes _B B^e =N \otimes _B (B^o \otimes _A B ) \cong (N \otimes _B B^o)  \otimes _A B \cong N |_A \otimes _A B$ such that $x\otimes(b_1^o\otimes b_2)\mapsto xb_1\otimes b_2$ for all $x\in N$ and $b_1,b_2\in B$.
Therefore, we get the commutative diagram
\begin{equation}\label{eq20201207s}
\xymatrix{
N \otimes _B B^e\ar[rr]^{\id_N\otimes \pi_B}\ar[d]_{\cong}&& N \otimes _B B \ar[d]^{\cong}\\
N |_A \otimes _A B\ar[rr]^{\pi_N}&&N
}
\end{equation}
of DG $B$-module homomorphisms.
Thus, by applying  $N \otimes _B - $ to the short exact sequence~\eqref{eq20201114a}, we obtain the short exact sequence~\eqref{basic SES}
in which  injectivity on the left follows from the fact  that $N$ is free as an underlying graded $B$-module.
\end{proof}

\begin{para}\label{para20201206b}
To clarify, note that the DG algebra homomorphism $\pi_B\colon B^e\to B$ defined in~\ref{para20201114d} coincides with the DG algebra homomorphism $\pi_B\colon B |_A \otimes _A B \to B$ defined in~\ref{para20201113a}. In fact, as we mentioned in the proof of Proposition~\ref{para20201114f} (the left column in~\eqref{eq20201207s} with $N=B$),
we have the isomorphism $B^e\cong B |_A \otimes _A B$.
\end{para}

We remind the reader of the definition of na\"ive liftability from the introduction.

\begin{defn}\label{defn20201125a}
Let $A\to B$ be a homomorphism of DG $R$-algebras such that $B$ is free as an underlying graded $A$-module. A semifree DG $B$-module $N$ is {\it na\"ively liftable}  to $A$ if
the map  $\pi _N$ is a split DG $B$-module epimorphism, i.e., there exists a DG $B$-module homomorphism $\rho\colon N \to  N |_A \otimes _A B$ that satisfies the equality $\pi _N \rho = \id _N$. Equivalently, $N$  is na\"ively liftable to $A$ if $\pi_N$ has a right inverse in the abelian category of right DG $B$-modules.
\end{defn}

If a semifree DG $B$-module $N$ is na\"ively liftable to $A$, then the short exact sequence~\eqref{basic SES} splits. This implies the following result.

\begin{cor}\label{naive definition}
Let $A\to B$ be a homomorphism of DG $R$-algebras such that $B$ is free as an underlying graded $A$-module. If a semifree DG $B$-module $N$ is na\"ively liftable to $A$, then $N$ is a direct summand of the DG $B$-module $N |_A \otimes _A B$ which is liftable to $A$.
\end{cor}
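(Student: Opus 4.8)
The plan is to read off both conclusions directly from the short exact sequence~\eqref{basic SES} and the definition of na\"ive liftability, so that essentially no new work is needed beyond unwinding definitions. First I would observe that, by Definition~\ref{defn20201125a}, the statement that $N$ is na\"ively liftable to $A$ means precisely that $\pi_N$ admits a right inverse $\rho\colon N\to N|_A\otimes_A B$ satisfying $\pi_N\rho=\id_N$. By Proposition~\ref{para20201114f}, the epimorphism $\pi_N$ fits into the short exact sequence~\eqref{basic SES} of DG $B$-modules, so the existence of such a splitting forces a direct sum decomposition $N|_A\otimes_A B\cong N\oplus(N\otimes_B J)$ in the category of DG $B$-modules. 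In particular $N$ is a direct summand of $N|_A\otimes_A B$, which is the first assertion.

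For the second assertion I would exhibit an explicit lift of $N|_A\otimes_A B$ to $A$. The natural candidate is $M:=N|_A$, namely the DG $B$-module $N$ regarded as a DG $A$-module along $A\to B$. As recorded in~\ref{para20201113a}, since $B$ is free as an underlying graded $A$-module, $N|_A$ is a semifree DG $A$-module and $N|_A\otimes_A B$ is a semifree DG $B$-module. Hence $N|_A\otimes_A B=M\otimes_A B$ with $M$ a DG $A$-module and both $M$ and $M\otimes_A B$ semifree; by the definition of liftability this exhibits $N|_A\otimes_A B$ as liftable to $A$, completing the argument.

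There is essentially no obstacle here: both claims are formal consequences of Definition~\ref{defn20201125a} and Proposition~\ref{para20201114f}. The only point requiring a moment's care is checking that $N|_A$ genuinely qualifies as a lift in the sense of the liftability definition, that is, that it is a DG $A$-module whose base change recovers $N|_A\otimes_A B$ via an isomorphism in $\D(B)$. This is immediate once one invokes the semifreeness of $N|_A$ established in~\ref{para20201113a}, so that the honest tensor product $N|_A\otimes_A B$ computes the derived tensor product $N|_A\lotimes_A B$ and the required isomorphism in $\D(B)$ is the identity.
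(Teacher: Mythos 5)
Your proposal is correct and follows essentially the same route as the paper: the splitting of $\pi_N$ from Definition~\ref{defn20201125a} splits the short exact sequence~\eqref{basic SES}, giving the direct summand claim, and $N|_A\otimes_A B$ is liftable by construction with lift $N|_A$. The extra remark about semifreeness of $N|_A$ guaranteeing that the ordinary tensor product agrees with the derived one is a reasonable, if implicit in the paper, point of care.
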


We use the following result in the proof of Theorem~\ref{thm2021017a}.

\begin{thm}\label{extnaive}
Let $A\to B$ be a homomorphism of DG $R$-algebras such that $B$ is free as an underlying graded $A$-module. If $N$ is a semifree DG $B$-module such that $\Ext _B ^1 (N, N \otimes _BJ)=0$, then $N$ is na\"ively liftable to $A$.
\end{thm}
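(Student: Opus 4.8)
The plan is to read Definition~\ref{defn20201125a} as the demand for a DG $B$-module splitting of the short exact sequence~\eqref{basic SES} furnished by Proposition~\ref{para20201114f}, namely
$0 \to N \otimes_B J \to N|_A \otimes_A B \xra{\pi_N} N \to 0$,
and then to identify the obstruction to splitting this sequence with a cohomology class in $\Ext^1_B(N, N\otimes_B J)$. Since that group vanishes by hypothesis, the obstruction is zero and the desired splitting $\rho$ exists, giving naïve liftability. The argument is the standard ``obstruction lives in $\Ext^1$'' computation, adapted to the DG setting via the description of $\Ext$ in~\ref{para20201124c}.

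First I would exploit that $N$ is semifree, hence free as an underlying graded $B$-module, so the sequence~\eqref{basic SES} of underlying graded $B$-modules splits: there is a graded $B$-linear map $\sigma\colon N \to N|_A \otimes_A B$ of degree $0$ with $\pi_N\sigma = \id_N$, but with $\sigma$ not necessarily a chain map. I would then measure the failure of $\sigma$ to commute with the differentials by forming $\theta := \partial\sigma - \sigma\partial^N$, where $\partial$ is the differential of $N|_A\otimes_A B$ from~\ref{para20201113a}; this is a graded $B$-linear map of degree $-1$. Applying $\pi_N$ and using that $\pi_N$ is a chain map with $\pi_N\sigma = \id_N$ shows $\pi_N\theta = \partial^N\pi_N\sigma - \pi_N\sigma\partial^N = 0$, so $\theta$ factors through $\ker\pi_N = N\otimes_B J$; thus $\theta \in \Hom_B(N, N\otimes_B J)_{-1}$.

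Next I would verify that $\theta$ is a cycle in the complex $\Hom_B(N, N\otimes_B J)$, using $\partial^2 = 0 = (\partial^N)^2$; by~\ref{para20201124c} this yields a well-defined class $[\theta] \in \HH_{-1}\Hom_B(N, N\otimes_B J) = \Ext^1_B(N, N\otimes_B J)$. The hypothesis $\Ext^1_B(N, N\otimes_B J) = 0$ forces $[\theta] = 0$, so $\theta = \partial(\psi)$ for some degree-$0$ graded $B$-linear map $\psi\colon N \to N\otimes_B J$. Finally I would set $\rho := \sigma - \psi$, regarding $\psi$ as a map into $N|_A\otimes_A B$ through the inclusion $N\otimes_B J \hookrightarrow N|_A\otimes_A B$: then $\pi_N\rho = \id_N$ because $\psi$ lands in $\ker\pi_N$, and a direct computation gives $\partial\rho - \rho\partial^N = \theta - \partial(\psi) = 0$, so $\rho$ is a genuine DG $B$-module homomorphism with $\pi_N\rho = \id_N$. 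This is precisely the splitting required by Definition~\ref{defn20201125a}.

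The main obstacle is sign and convention bookkeeping: one must pin down the differential of the Hom-complex (for which the homotopy formula in~\ref{para20201124c} is the reference) so that the degree-$(-1)$ defect $\theta$ genuinely represents an $\Ext^1$-class, and then confirm that the correction term $\partial(\psi)$ cancels $\theta$ exactly, upgrading the graded splitting $\sigma$ into the chain map $\rho$. The conceptual content—that the obstruction to splitting a short exact sequence of DG modules with semifree cokernel lies in $\Ext^1$—is routine, and semifreeness of $N$ enters only to produce the initial graded splitting $\sigma$.
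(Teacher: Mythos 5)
Your argument is correct and is essentially the paper's approach: the paper's proof consists of citing \cite[Theorem A]{NassehDG} to conclude that the graded-split short exact sequence \eqref{basic SES} splits as DG modules once $\Ext^1_B(N,N\otimes_BJ)=0$, and your obstruction computation (the degree $-1$ defect $\theta=\partial\sigma-\sigma\partial^N$ landing in $\ker\pi_N$, being a cycle, and being a boundary by hypothesis) is exactly a self-contained proof of the relevant direction of that cited result. The bookkeeping you flag does work out, since $\theta$ is strictly $B$-linear and $N\otimes_BJ$ is a DG submodule of $N|_A\otimes_AB$, so the correction $\rho=\sigma-\psi$ is a genuine DG $B$-module splitting.
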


\begin{proof}
Since $N$ is a semifree DG $B$-module, it follows from our Ext-vanishing assumption and~\cite[Theorem A]{NassehDG} that the short exact sequence~\eqref{basic SES} splits. Hence, $N$ is na\"ively liftable to $A$, as desired.
\end{proof}

\begin{para}{\emph{Proof of Theorem~\ref{thm2021017a}.}}\label{para20201125a}
By Theorem~\ref{Extzero} we have $\Ext_B ^i (N,N \otimes _B  J/J^{[\ell]})=0$ for all $i \geq 0$ and all $\ell\geq 1$. It follows from Theorem~\ref{HLtheorem} that $\Ext_B ^{i} (N,N \otimes _B  J)=0$ for all $i\geq 1$. Hence, by Theorem~\ref{extnaive}, $N$ is na\"{\i}vely liftable to $A$. The fact that $N$ is a direct summand of a DG $B$-module that is liftable to $A$ was already proved in Corollary~\ref{naive definition}.\qed
\end{para}

\section{Diagonal ideals in free and polynomial extensions and proof of Main Theorem}\label{sec20210107a}
This section is devoted to the properties of diagonal ideals in free and polynomial extensions of certain DG $R$-algebra $A$ with the aim to prove that such extensions are DG (quasi-)smooth over $A$; see Corollary~\ref{cor20210105a}. Then we will give the proof of our Main Theorem in~\ref{para20210107s}.
First, we focus on free extensions of DG algebras.

\begin{para}\label{para20201114a}
Let $B=A \langle X_1,\ldots,X_n \rangle$ with $n\leq \infty$, where $A$ is a divided power DG $R$-algebra. It follows from~\cite[Proposition 1.7.6]{GL} that $B$ is a divided power DG $R$-algebra.
Also, $B^o$ is a divided power DG $R$-algebra with the divided power structure $(b^o)^{(i)} = (b^{(i)})^o$, for all $b \in B$ and $i\in \mathbb{N}$.
\end{para}

The next lemma indicates that, in the setting of~\ref{para20201114a}, the map $\pi _B$ is a homomorphism of divided power algebras in the sense of~\cite[Definition 1.7.3]{GL}.

\begin{lem}\label{lem20201115a}
Let $B=A \langle X_1,\ldots,X_n \rangle$ with $n\leq \infty$, where $A$ is a divided power DG $R$-algebra. The DG algebra homomorphism $\pi _B$ preserves the divided powers.
\end{lem}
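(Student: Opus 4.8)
The plan is to verify directly that $\pi_B$ commutes with the divided power operations on both $B^e$ and $B$. By~\ref{para20201114a}, both the enveloping algebra $B^e = B^o \otimes_A B$ and the target $B$ carry divided power structures (the divided powers on $B^e$ come from those on $B^o$ and $B$ via the standard construction for tensor products of divided power algebras), so the statement makes sense. According to~\cite[Definition 1.7.3]{GL}, a homomorphism of divided power algebras is a DG algebra homomorphism $f$ satisfying $f(z^{(i)}) = f(z)^{(i)}$ for every element $z$ of positive even degree and every $i \geq 0$. Since $\pi_B$ is already known to be a DG $R$-algebra homomorphism by~\ref{para20201114d}, the entire content of the lemma is to check this last compatibility on divided powers.

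First I would reduce to generators. The divided power structure on $B^e$ is determined by its values on the elements of positive even degree, and by the axioms in~\ref{para20201203a}---in particular axioms (2), (3), (4), and (5)---the divided powers of a general element are built from the divided powers of a set of algebra generators together with ordinary products. Because $\pi_B$ is multiplicative and additive, it suffices to verify $\pi_B(z^{(i)}) = \pi_B(z)^{(i)}$ on a generating set of $B^e$ over which the divided power structure is explicitly controlled, and then propagate through the axioms. The natural generators here are $b^o \otimes 1$ and $1^o \otimes b$ for $b \in B$, since $B^o$ and $B$ sit inside $B^e$ as DG subalgebras (see~\ref{para20201114e}); on these $\pi_B$ acts by $\pi_B(b^o \otimes 1) = b$ and $\pi_B(1^o \otimes b) = b$. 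On each of these subalgebras the compatibility $\pi_B((b^o \otimes 1)^{(i)}) = \pi_B((b^{(i)})^o \otimes 1) = b^{(i)} = \pi_B(b^o \otimes 1)^{(i)}$ and the analogous identity for $1^o \otimes b$ hold immediately, using the divided power structures on $B^o$ and $B$ recorded in~\ref{para20201114a}.

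The remaining step is to handle a general positive-even-degree element, which is a sum of products of such generators, and here the subtlety lies in axiom (3) governing divided powers of sums and axiom (4) governing divided powers of products of an even and an odd element. I would write an arbitrary even-degree element of $B^e$ in terms of the generators and apply these axioms on both the source and target, checking that $\pi_B$ intertwines them; the key point is that $\pi_B$ respects sums and products, so once the divided power of a sum or product is expanded via~\ref{para20201203a}(3) and (4) into ordinary operations and divided powers of the generators, applying $\pi_B$ termwise and using the generator case gives the result.

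The main obstacle I anticipate is the bookkeeping around signs and the mixed terms that arise when an element of positive even degree is expressed using generators of odd degree (for instance, a product $X_i X_j$ with $|X_i|, |X_j|$ odd has even degree), since axiom (4) then forces certain divided powers to vanish and one must confirm that $\pi_B$ sends the vanishing terms on the source to the vanishing terms on the target consistently. Once these case distinctions are dispatched---and they are essentially forced by the axioms once multiplicativity and additivity of $\pi_B$ are invoked---the identity $\pi_B(z^{(i)}) = \pi_B(z)^{(i)}$ follows for all $z$ and $i$, completing the verification that $\pi_B$ preserves divided powers.
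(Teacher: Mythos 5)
Your proposal is correct and takes essentially the same approach as the paper: the paper's proof likewise verifies $\pi_B(z^{(i)})=\pi_B(z)^{(i)}$ directly on pure tensors $b_1^o\otimes b_2$, using the explicit divided power structure on $B^e$ (which is exactly axiom (4) of~\ref{para20201203a} applied to the product $(b_1^o\otimes 1)(1^o\otimes b_2)$, with the two cases of parities you single out) together with the identity $(b_1b_2)^{(i)}=b_1^ib_2^{(i)}$ in $B$, and then concludes by additivity since every element of $B^e$ is a finite sum of such tensors. The only cosmetic difference is that the paper writes down the formula for $(b_1^o\otimes b_2)^{(i)}$ up front rather than deriving it from the subalgebra generators $b^o\otimes 1$ and $1^o\otimes b$.
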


\begin{proof}
Note that $B^e$ is a divided power DG $R$-algebra by setting
$$
(b_1^o \otimes b_2)^{(i)} = \begin{cases} (b_1^i)^o \otimes b_2 ^{(i)} & \text{if $|b_1|, |b_2|$ are even and $|b_2| > 0$ } \\  0 & \text{if  $|b_1|, |b_2|$ are odd } \end{cases}
$$
for all $b_1^o \otimes b_2\in B^e$ of positive even degree and all integers $i\geq 2$.
By the properties of divided powers in~\ref{para20200329a}, for such $b_1^o \otimes b_2\in B^e$ we have
\begin{align*}
\pi_B \left( (b^o_1 \otimes b_2) ^{(i)} \right)&=\begin{cases} \pi_B\left((b_1^i)^o \otimes b_2 ^{(i)}\right) & \text{if $|b_1|, |b_2|$ are even and $|b_2| > 0$ } \\  0 & \text{if  $|b_1|, |b_2|$ are odd } \end{cases}\\
&=\begin{cases} b_1^i b_2 ^{(i)} & \text{if $|b_1|, |b_2|$ are even and $|b_2| > 0$ } \\  0 & \text{if  $|b_1|, |b_2|$ are odd } \end{cases}\\
&=(b_1 b_2) ^{(i)}\\
&=\left(\pi_B  (b^o_1 \otimes b_2)  \right)^{(i)}.
\end{align*}
Now, the assertion follows from the fact that every element of $B^e$  is a finite sum of the elements of the form
$b^o_1 \otimes b_2$.
\end{proof}

\begin{para}\label{para20201126c}
In Lemma~\ref{lem20201115a}, we assume that $A$ is a divided power DG $R$-algebra to show that $J$ is closed under taking divided powers.
Note that elements of $J$ are not all of the form of a monomial, so to define the ``powers'' of non-monomial elements we need to consider the divided powers.
For example, for a positive integer $\ell$ we cannot define $(X_i+a)^{(\ell)}$, where $a \in A$ without assuming that $A$ is a divided power DG $R$-algebra.
\end{para}

\begin{para}\label{para20201115a}
Let $B=A \langle X_1,\ldots,X_n \rangle$ with $n\leq \infty$, where $A$ is a divided power DG $R$-algebra. For $1\leq i \leq n$, the \emph{diagonal of the variable  $X_i$} is an element of $B^e$ which is defined by the formula
\begin{equation}\label{xi}
\xi _i = X_i^o \otimes 1 - 1^o \otimes X_i.
\end{equation}
Since  $\pi _B (\xi _i) =0$, we have that $\xi _i \in J$ for all $i$.
Note that if $|X_i|$ is odd, then $\xi _i^2=0$. From the basic properties of divided powers we have
\begin{equation}\label{xi(m)}
\xi _i^{(m)} = \sum _{j=0}^m (-1)^{m-j} \left(X_i^{(j)}\right)^o \otimes X_i^{(m-j)}
\end{equation}
for all $m \in \mathbb{N}$, considering the conventions that  $\xi _i ^{(0)}=1^o\otimes 1$ and $\xi_i^{(m)}=0$ for all $m\geq 2$ if $|\xi_i|=|X_i|$ is odd. Note that $\xi_i^{(1)}=\xi_i$ by definition.
Since by Lemma~\ref{lem20201115a}, the map $\pi _B$ preserves the divide powers, we see that $\xi _i^{(m)} \in J$ for all $i$ and $m\in \mathbb{N}$.
Let $\Omega=\{ \xi _i ^{(m)} \mid 1 \leq i \leq n,  \ m\in \mathbb{N}\}$.
\end{para}

\begin{lem}\label{lem20201116a}
Let $B=A \langle X_1,\ldots,X_n \rangle$ with $n\leq \infty$, where $A$ is a divided power DG $R$-algebra. The diagonal ideal $J$ is generated by $\Omega$, that is, $J= \Omega  B^e$.
\end{lem}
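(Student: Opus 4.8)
The inclusion $\Omega B^e \subseteq J$ is immediate: each $\xi _i^{(m)}$ lies in $J$ by~\ref{para20201115a}, and $J$ is a DG ideal of $B^e$ by~\ref{para20201206a}. For the reverse inclusion, the plan is to show that modulo $\Omega B^e$ every element of $B^e$ can be pushed into the DG $R$-subalgebra $1^o\otimes B$, and then to exploit that $\pi _B$ is injective on $1^o\otimes B$. As a preliminary I would record that, since $B$ is graded commutative, so is $B^e$: a direct sign check on the product of~\ref{para20201114e} produces exactly the Koszul sign $(-1)^{(|b_1|+|b_2|)(|b_1'|+|b_2'|)}$. Consequently $\Omega B^e$ is a two-sided DG ideal and one may compute congruences modulo it freely, in particular using that congruence modulo $\Omega B^e$ is compatible with products and that $1^o\otimes B$ is a subring (with $\pi _B(1^o\otimes\beta)=\beta$).

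The key reduction is the claim $B^e = \Omega B^e + (1^o\otimes B)$. By~\ref{para20201206d} the elements $1^o\otimes X^{(\mathbf m)}$, with $X^{(\mathbf m)}=X_1^{(m_1)}\cdots X_n^{(m_n)}$ and $\mathbf m=(m_1,\ldots,m_n)$, form a $B^o$-basis of $B^e$; since $b^o\otimes\beta = ((X^{(\mathbf m)})^o\otimes 1)(1^o\otimes\beta)$ for $b=X^{(\mathbf m)}$, and both $\Omega B^e$ and $1^o\otimes B$ are stable under right multiplication by $1^o\otimes\beta$, it suffices to prove $(X^{(\mathbf m)})^o\otimes 1 \in \Omega B^e + (1^o\otimes B)$ for every monomial. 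For a single variable I would use the explicit expansion $\xi _i^{(m)}=\sum_{a=0}^m(-1)^a (X_i^{(m-a)})^o\otimes X_i^{(a)}$ from~\eqref{xi(m)}, whose $a=0$ term is $(X_i^{(m)})^o\otimes 1$, to solve $(X_i^{(m)})^o\otimes 1 = \xi _i^{(m)} - \sum_{a=1}^m(-1)^a (X_i^{(m-a)})^o\otimes X_i^{(a)}$. Each remaining summand reduces by induction on $m$: as $m-a<m$ we may assume $(X_i^{(m-a)})^o\otimes 1\equiv 1^o\otimes\gamma\pmod{\Omega B^e}$ for some $\gamma\in B$, and right multiplication by $1^o\otimes X_i^{(a)}$ keeps it inside $1^o\otimes B$ modulo $\Omega B^e$; since $\xi _i^{(m)}\in\Omega B^e$, this gives $(X_i^{(m)})^o\otimes 1\equiv(\text{element of }1^o\otimes B)$. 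A general monomial factors as a product of single-variable ones, and compatibility of congruence with products (using two-sidedness of $\Omega B^e$) together with the fact that $1^o\otimes B$ is a subring then finishes the claim; in the case $n=\infty$ every multi-index has finite support, so each reduction is a finite computation.

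To conclude, take $u\in J$ and write $u=\omega+v$ with $\omega\in\Omega B^e$ and $v\in 1^o\otimes B$. Since $\Omega B^e\subseteq J$ we have $v=u-\omega\in J=\ker\pi _B$; but $\pi _B$ sends $1^o\otimes\beta$ to $\beta$ and is therefore injective on $1^o\otimes B$, whence $v=0$ and $u=\omega\in\Omega B^e$. Combined with the first paragraph this yields $J=\Omega B^e$. The main obstacle is the inductive monomial reduction together with the attendant Koszul-sign and divided-power bookkeeping (the identities of~\ref{para20200329a} feeding~\eqref{xi(m)}); once the decomposition $B^e=\Omega B^e+(1^o\otimes B)$ is established, the conclusion is formal.
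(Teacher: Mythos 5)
Your proof is correct and follows essentially the same route as the paper: both arguments reduce an arbitrary element of $J$ modulo $\Omega B^e$ into the subring $1^o\otimes B$ by expanding $\xi_i^{(m)}$ via~\eqref{xi(m)} and inducting on $m$, and then conclude using that $\pi_B$ is injective on $1^o\otimes B$. The only (cosmetic) difference is that the paper establishes the sharper congruence $(X_i^{(m)})^o\otimes 1\equiv 1^o\otimes X_i^{(m)}\pmod{\Omega B^e}$, at the cost of the binomial identity $\sum_{j=1}^{m-1}(-1)^{m-j}\binom{m}{j}=-1-(-1)^m$, whereas you settle for landing at some unspecified element of $1^o\otimes B$, which indeed suffices for the final step.
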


The statement of this lemma is equivalent to the equality $J= B \Omega  B$.

\begin{proof}
Since $\Omega \subseteq J$, we have $J' :=  \Omega  B^e\subseteq J$.
Now we show $J\subseteq J'$.

We claim that for all $1\leq i\leq n$ with $n\leq \infty$ and all $m\in \mathbb{N}$ we have the equality
\begin{equation}\label{ximb}
(X_i ^{(m)})^o \otimes 1 \equiv 1^o \otimes X_i ^{(m)} \pmod{J'}.
\end{equation}
To prove this claim, we proceed by induction on $m\in \mathbb{N}$. For the base case, since $\xi_i\in \Omega\subseteq J'$, we have $X_i^o \otimes 1 \equiv 1^o \otimes X_i \pmod{J'}$ for all $1\leq i\leq n$ with $n\leq \infty$.  Note that for all $1\leq i\leq n$ with $n\leq \infty$, we have $\sum _{j=0}^m (-1)^{m-j} (X_i^{(j)})^o \otimes X_i^{(m-j)} = \xi _i^{(m)} \equiv 0 \pmod{J'}$. Hence, we obtain a series of congruencies modulo $J'$ as follows:
\begin{eqnarray*}
(X_i ^{(m)})^o \otimes 1 + (-1)^m  (1^o \otimes X_i ^{(m)})
&\equiv& -\sum _{j=1}^{m -1} (-1)^{m-j} (X_i^{(j)})^o \otimes X_i^{(m-j)}  \\
&=& -\sum _{j=1}^{m -1} (-1)^{m-j} \left((X_i^{(j)})^o\otimes 1\right) \left(1^o\otimes X_i^{(m-j)}\right)  \\
&\equiv& -\sum _{j=1}^{m -1} (-1)^{m-j} \left(1^o\otimes X_i^{(j)}\right) \left(1^o\otimes X_i^{(m-j)}\right)  \\
&\equiv& -\sum _{j=1}^{m -1} (-1)^{m-j} (1^o \otimes X_i^{(j)} X_i^{(m-j)}) \\
&\equiv&-\sum _{j=1}^{m -1} (-1)^{m-j} \binom{m}{j} (1^o \otimes X_i^{(m)})
\end{eqnarray*}
where the third step uses the inductive hypothesis.
The claim now follows from the well-known equality $\sum _{j=1}^{m -1} (-1)^{m-j} \binom{m}{j}=-1-(-1)^m$.

Now let $\beta\in J\subseteq B^e$ be an arbitrary element. It follows from~\eqref{ximb} that there exists an element $b_{\beta}  \in B$ such that $ \beta \equiv 1 \otimes b_{\beta} \pmod{J'}$.
Since $\pi _B (\beta)=0$, we have
$\pi _B (1 \otimes b_{\beta})= b_{\beta} = 0$. Hence, $\beta \equiv 0 \pmod{J'}$, which means that $\beta\in J'$. This implies that $J \subseteq J'$, as desired.
\end{proof}

\begin{lem}\label{basis}
Let $B=A \langle X_1,\ldots,X_n \rangle$ with $n\leq \infty$, where $A$ is a divided power DG $R$-algebra. The set
$$
\Mon(\Omega)=
\begin{cases}
\{ \xi_1^{(m_1)}\cdots \xi_n^{(m_n)} \ | \ \xi_i ^{(m_i)} \in \Omega \ (1 \leq i \leq n) \} \cup \{1^o \otimes 1 \}&\text{if}\ n<\infty\\
\{ \xi_{i_1}^{(m_{i_1})}\cdots \xi_{i_t}^{(m_{i_t})} \ | \ \xi_{i_j} ^{(m_{i_j})} \in \Omega \ (i_j\in \mathbb{N}, t<\infty) \} \cup \{1^o \otimes 1 \}&\text{if}\ n=\infty
\end{cases}
$$
is a basis for the underlying graded free $B^o$-module $B^e$.
Also, the diagonal ideal $J$ is a free graded $B^o$-module with the graded basis
$\Mon (\Omega) \setminus \{1^o \otimes 1 \}$.
\end{lem}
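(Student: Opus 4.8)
The plan is to reduce everything to the $B^o$-basis $\Mon(\Gamma)$ of $B^e$ already recorded in~\ref{para20201206d}, by exhibiting an invertible, triangular change of basis (with $\pm1$ on the diagonal) between $\Mon(\Gamma)$ and $\Mon(\Omega)$. Both monomial families are indexed by the same exponent data, namely tuples $(m_1,\ldots,m_n)$ (with $m_i\in\{0,1\}$ when $|X_i|$ is odd, and only finitely many $m_i$ nonzero when $n=\infty$), under the conventions $X_i^{(0)}=1$ and $\xi_i^{(0)}=1^o\otimes 1$: to the tuple $(m_1,\ldots,m_n)$ we associate the $\Gamma$-monomial $1^o\otimes X_1^{(m_1)}\cdots X_n^{(m_n)}$ and the $\Omega$-monomial $\xi_1^{(m_1)}\cdots\xi_n^{(m_n)}$. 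Once I show the matrix passing from one indexed family to the other is triangular with unit diagonal entries, invertibility follows and $\Mon(\Omega)$ is seen to be a $B^o$-basis of $B^e$; the statement about $J$ is then extracted by splitting off the single basis element $1^o\otimes 1$.

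First I would expand each $\Omega$-monomial in the $\Gamma$-basis. Using the formula~\eqref{xi(m)} for $\xi_i^{(m_i)}$ together with the multiplication rule of $B^e$ from~\ref{para20201114e}, the product $\xi_1^{(m_1)}\cdots\xi_n^{(m_n)}$ becomes a $B^o$-linear combination of the $\Gamma$-monomials $1^o\otimes X_1^{(k_1)}\cdots X_n^{(k_n)}$ with $k_i\leq m_i$ for every $i$; sliding the left tensor factor into the coefficient, the term indexed by $k$ carries the coefficient $\pm\bigl(X_1^{(m_1-k_1)}\cdots X_n^{(m_n-k_n)}\bigr)^o\in B^o$. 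The crucial point is that the extreme index $k=(m_1,\ldots,m_n)$ occurs exactly once and with coefficient $\pm1$, because equality $k_i=m_i$ for all $i$ forces every divided-power exponent $m_i-k_i$ to vanish; all remaining terms have a strictly smaller multi-index in the componentwise partial order. Fixing a total order refining this partial order, the change-of-basis matrix is triangular with diagonal entries $\pm1$, which are units of $B^o$, hence invertible. In the finite case this is immediate; in the infinite case each $\Omega$-monomial already involves only finitely many variables, and I would invert the triangular relation recursively (back-substitution), expressing each $\Gamma$-monomial $1^o\otimes X_1^{(k_1)}\cdots X_n^{(k_n)}$ as a finite $B^o$-combination of the $\Omega$-monomials indexed by tuples $j\leq k$; this is a finite process since only finitely many multi-indices lie below a given one, and it simultaneously yields linear independence and spanning of $\Mon(\Omega)$.

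For the assertion about $J$, I would combine Lemma~\ref{lem20201116a} with the basis just obtained. Let $M$ denote the free $B^o$-submodule of $B^e$ spanned by $\Mon(\Omega)\setminus\{1^o\otimes 1\}$, so that the basis property gives the direct sum decomposition $B^e=B^o(1^o\otimes 1)\oplus M$. Every element of $\Mon(\Omega)\setminus\{1^o\otimes 1\}$ is a product of the $\xi_i^{(m_i)}$ in which at least one exponent is positive, hence lies in the ideal $J$ by~\ref{para20201115a}; therefore $M\subseteq J$. Conversely, $\pi_B(b^o\otimes 1)=b$ shows that $\pi_B$ is injective on $B^o(1^o\otimes 1)$, so $J\cap B^o(1^o\otimes 1)=0$. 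Given $\alpha\in J$, writing $\alpha=c(1^o\otimes 1)+\mu$ with $\mu\in M\subseteq J$ forces $c(1^o\otimes 1)=\alpha-\mu\in J\cap B^o(1^o\otimes 1)=0$, so $\alpha=\mu\in M$. Hence $J=M$ is free over $B^o$ with the claimed basis.

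I expect the main obstacle to be the coefficient- and sign-bookkeeping in the expansion step: one must check that the cross terms coming from distinct variables contribute no divided-power binomial coefficients (these arise only from powers of a single element, via property~(2) of~\ref{para20201203a}) and that the leading coefficient is exactly $\pm1$. Once the triangularity is in place, both the basis statement and the extraction of $J$ are formal, the only extra care in the infinite case being that the inverse change of basis stays finitely supported, which the back-substitution guarantees.
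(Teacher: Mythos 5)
Your proposal is correct and follows essentially the same route as the paper: both arguments rest on the observation that, by~\eqref{xi(m)}, the monomial $\xi_1^{(m_1)}\cdots\xi_n^{(m_n)}$ equals $(-1)^{m_1+\cdots+m_n}\,1^o\otimes X_1^{(m_1)}\cdots X_n^{(m_n)}$ plus terms with strictly smaller exponents, giving a unitriangular change of basis from $\Mon(\Gamma)$ to $\Mon(\Omega)$. The only cosmetic difference is that you order monomials by the componentwise partial order on exponent tuples and invert by back-substitution, whereas the paper packages the same triangularity into the filtration $F_\ell(B^e)$ by total degree $m_1+\cdots+m_n$ and argues on the associated graded pieces; your treatment of $J$ (splitting off $B^o(1^o\otimes 1)$, on which $\pi_B$ is injective) is likewise just an explicit version of the paper's appeal to the exact sequence~\eqref{eq20201114a}.
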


\begin{proof}
Recall from~\ref{para20201206d} that the underlying graded $B^o$-module $B^e$ is free with the basis $\Mon(\Gamma)$. We prove the lemma for $n<\infty$; the case of $n=\infty$ is similar.

Assume that $n<\infty$. For an integer $\ell \geq 0$ let
\begin{gather*}
\Mon _{\ell} (\Gamma ) = \{ (1^o \otimes X_1^{(m_1)}X_2^{(m_2)}\cdots X_n^{(m_n)})\in \Mon(\Gamma) \mid  m_1 + \cdots + m_n = \ell \}\\
\Mon _{\ell} (\Omega ) = \{ \xi_1^{(m_1)}\xi_2^{(m_2)}\cdots \xi_n^{(m_n)}\in \Mon(\Omega)  \mid  m_1 + \cdots + m_n = \ell \}.
\end{gather*}
Also let $F_{\ell} (B^e)$ be the free $B^o$-submodule of $B^e$ generated by  $\bigcup _{0 \leq i \leq \ell} \Mon _{i} (\Gamma )$,  i.e.,
$$
F_{\ell} (B^e) =\left( \bigcup _{0 \leq i \leq \ell} \Mon _{i} (\Gamma ) \right)  B^o = \sum _{i=0} ^{\ell}  \Mon _{i}  (\Gamma) B^o
$$
Then the family $\{ F_{\ell} (B^e) \mid \ell \geq 0\}$ is a filtration of the $B^o$-module $B^e$ satisfying the following properties:
\begin{enumerate}[\rm(1)]
\item   $ B^o \otimes 1=F_{0} (B^e) \subset  F_{1} (B^e) \subset  \cdots  \subset  F_{\ell} (B^e) \subset F_{\ell +1 } (B^e) \subset \cdots \subset B^e$;
\item $\bigcup _{\ell \geq 0} F_{\ell} (B^e) = B^e$;
\item $F_{\ell} (B^e) F_{\ell '} (B^e) \subseteq F_{\ell + \ell '} (B^e)$; and
\item each $F_{\ell} (B^e) /F_{\ell -1} (B^e)$ is a free $B^o$-module with free basis  $\Mon _{\ell } (\Gamma)$.
\end{enumerate}
Regarding $B^e$ as a $B^o$-module, by~\eqref{xi(m)} for all $1\leq i\leq n$ and $m\geq 1$ we have
$$
\xi _i^{(m)} = (-1)^m(1^o \otimes X_i^{(m)}) + \sum _{j=1}^m (-1)^{m-j} \left(1^o \otimes X_i^{(m-j)}\right)\left(X_i^{(j)}\right)^o.
$$
Hence,
$\xi _i^{(m)} - (-1)^m(1^o \otimes X_i^{(m)})\in F_{m-1}(B^e)$.
Therefore, if $\xi_1^{(m_1)}\xi_2^{(m_2)}\cdots \xi_n^{(m_n)} \in \Mon _{\ell} (\Omega)$, then we get a sequence of congruencies modulo $F_{\ell -1} (B^e)$ as follows:
\begin{eqnarray*}
\xi_1^{(m_1)}\xi_2^{(m_2)}\cdots \xi_n^{(m_n)}
&\equiv & (-1)^{m_1} (1^o \otimes X_1^{(m_1)})\xi_2^{(m_2)}\cdots \xi_n^{(m_n)} \\
&\equiv& \cdots  \\
&\equiv&  (-1)^{m_1+\cdots + m_n} (1^o \otimes X_1^{(m_1)}X_2^{(m_2)}\cdots X_n^{(m_n)})\\
&=&  (-1)^{\ell} (1^o \otimes X_1^{(m_1)}X_2^{(m_2)}\cdots X_n^{(m_n)}).
\end{eqnarray*}
Thus, $\Mon _{\ell} (\Omega)$ is a basis for the $B^o$-module $F_{\ell} (B^e) /F_{\ell -1} (B^e)$.
By induction on $\ell$ we can see that $F_{\ell} (B^e)$ itself is also a free $B^o$-module with basis
$\bigcup _{0 \leq i \leq \ell} \Mon _{i} (\Omega)$.
In particular, every finite subset of $\Mon (\Omega)$ is linearly independent over $B^o$.
Since $\bigcup _{\ell \geq 0} F_{\ell} (B^e) = B^e$, the set $\Mon (\Omega)$ generates $B^e$ as a $B^o$-module.
Therefore, $\Mon (\Omega)$ is a basis of the free $B^o$-module  $B^e$.

The fact that the diagonal ideal $J$ is free over $B^o$ with the basis
$\Mon (\Omega) \setminus \{1^o \otimes 1 \}$ follows from the short exact sequence~\eqref{eq20201114a}.
\end{proof}

\begin{thm}\label{free extension}
Let $B=A \langle X_1,\ldots,X_n \rangle$ with $n\leq \infty$, where $A$ is a divided power DG $R$-algebra. The DG algebra homomorphism $B^o \to B^e$ defined by $b^o\mapsto b^o\otimes 1$ is a free extension of DG $R$-algebras, i.e.,
$B^e =B^o \langle \xi _1, \xi_2, \ldots , \xi_n \rangle$ with $n\leq \infty$.
\end{thm}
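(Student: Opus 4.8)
The plan is to verify directly that $B^e$, together with the elements $\xi_1,\ldots,\xi_n$, satisfies the defining properties of the free extension $B^o\langle\xi_1,\ldots,\xi_n\rangle$ laid out in~\ref{para20201112a} and~\ref{para20201112b}. Recall from~\ref{para20201114e} that $B^o\hookrightarrow B^e$ is an injective DG $R$-algebra homomorphism, and that $|\xi_i|=|X_i|$, so the degree requirement $0<|\xi_1|\leq\cdots\leq|\xi_n|$ is inherited from $B$. The underlying graded-module content of the claim is already supplied by Lemma~\ref{basis}: the graded $B^o$-module $B^e$ is free on the divided-power monomials $\Mon(\Omega)$ in $\xi_1,\ldots,\xi_n$, which is exactly the underlying graded $B^o$-module of a free extension on these variables. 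Thus the two things left to pin down are (i) that the $B^o$-submodules $C^{(i)}$ spanned by the monomials in $\xi_1,\ldots,\xi_i$ are DG subalgebras forming the required tower $B^o=C^{(0)}\subset C^{(1)}\subset\cdots$, and (ii) the differential condition $d^{B^e}\xi_i\in C^{(i-1)}$.

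First I would record the differential. Using the formula for $d^{B^e}$ from~\ref{para20201114e} together with $d^{B^o}(a^o)=d^A(a)^o$, a short computation gives
\[
d^{B^e}\xi_i \;=\; (d^BX_i)^o\otimes 1 - 1^o\otimes d^BX_i \;=\; t_i^o\otimes 1-1^o\otimes t_i,
\]
where $t_i=d^BX_i$ is, by the construction of $B=A\langle X_1,\ldots,X_n\rangle$, a cycle in the sub-DG-algebra $A^{(i-1)}=A\langle X_1,\ldots,X_{i-1}\rangle$. So $d^{B^e}\xi_i$ is the ``diagonal'' of $t_i$, and it is automatically a cycle since $(d^{B^e})^2=0$.

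The heart of the argument --- and the step I expect to be the main obstacle --- is to show that $d^{B^e}\xi_i=t_i^o\otimes 1-1^o\otimes t_i$ lies in $C^{(i-1)}=B^o\langle\xi_1,\ldots,\xi_{i-1}\rangle$, i.e.\ that only the variables $\xi_1,\ldots,\xi_{i-1}$ occur. Here I would invoke the results already established for the smaller extension $A^{(i-1)}$ in place of $B$. Since $\pi_{A^{(i-1)}}(t_i^o\otimes 1-1^o\otimes t_i)=t_i-t_i=0$, this element lies in the diagonal ideal $J_{A^{(i-1)}/A}\subseteq (A^{(i-1)})^e$, which by Lemma~\ref{lem20201116a} applied to $A^{(i-1)}$ is generated over $(A^{(i-1)})^e$ by $\{\xi_1^{(m)},\ldots,\xi_{i-1}^{(m)}\mid m\in\mathbb{N}\}$. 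Mapping everything into $B^e$ along the natural map $(A^{(i-1)})^e\to B^e$, the image of $(A^{(i-1)})^o\otimes 1$ lands in $B^o$, while for $j\leq i-1$ one has $1^o\otimes X_j=X_j^o\otimes 1-\xi_j$ and, more generally via~\eqref{xi(m)} (equivalently, the congruence~\eqref{ximb} carried out inside $A^{(i-1)}$), each $1^o\otimes X_j^{(m)}$ lies in $C^{(i-1)}$. Hence the image of $(A^{(i-1)})^e$, and therefore of its diagonal ideal, is contained in the subalgebra $C^{(i-1)}$; in particular $t_i^o\otimes 1-1^o\otimes t_i\in C^{(i-1)}$, as needed.

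Finally I would assemble the pieces. By Lemma~\ref{basis} each $C^{(i)}$ is a free graded $B^o$-module on the monomials $\xi_1^{(m_1)}\cdots\xi_i^{(m_i)}$; the divided-power relations $\xi_j^{(m)}\xi_j^{(\ell)}=\binom{m+\ell}{m}\xi_j^{(m+\ell)}$ hold in $B^e$, and the Leibniz rule together with the divided-power differential rule $d^{B^e}\xi_j^{(m)}=\xi_j^{(m-1)}\,d^{B^e}\xi_j$ combined with step (ii) show $d^{B^e}(C^{(i)})\subseteq C^{(i)}$, so each $C^{(i)}$ is a DG subalgebra and $C^{(i)}=C^{(i-1)}\langle\xi_i\rangle$ is a simple free extension in the sense of~\ref{para20201112a}. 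This exhibits the tower $B^o=C^{(0)}\subset C^{(1)}\subset\cdots\subset C^{(n)}=B^e$, proving $B^e=B^o\langle\xi_1,\ldots,\xi_n\rangle$; for $n=\infty$ one passes to the union $\bigcup_{i}C^{(i)}=B^e$, the degree-wise finiteness being inherited from that of $B$ over $A$.
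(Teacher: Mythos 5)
Your proposal is correct and follows essentially the same route as the paper's proof: both rest on Lemma~\ref{basis} for the underlying graded $B^o$-module structure, compute $d^{B^e}\xi_i=(d^BX_i)^o\otimes 1-1^o\otimes d^BX_i$ with $d^BX_i$ a cycle in $A^{(i-1)}$, and then locate this element inside $B^o\langle\xi_1,\ldots,\xi_{i-1}\rangle$ by applying the structure results for the enveloping algebra of the smaller extension $A^{(i-1)}$ (the paper cites Lemma~\ref{basis} for $(A^{(i-1)})^e$ where you cite Lemma~\ref{lem20201116a}, but these are interchangeable here). Your additional bookkeeping about the tower $C^{(0)}\subset C^{(1)}\subset\cdots$ and the divided-power relations only makes explicit what the paper leaves implicit.
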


\begin{proof}
By Lemma~\ref{basis}, the set $\Mon (\Omega)$ is a basis for the underlying graded free $B^o$-module $B^e$.
To complete the proof, it suffices to show that for each $1\leq i\leq n$ with $n\leq \infty$ the element $d^{B^e} (\xi _i)$ belongs to $B^o \langle \xi _1, \xi_2, \ldots , \xi_{i-1} \rangle$ and is a cycle.
To see this, note that we have the equalities $$d^{B^e} (\xi _i) = (d^{B}(X_i))^o \otimes 1 - 1^o \otimes d^{B}(X_i)=(d^{A^{(i)}}(X_i))^o \otimes 1 - 1^o \otimes d^{A^{(i)}}(X_i)$$
in which $d^{A^{(i)}}(X_i) \in A^{(i-1)}$ is a cycle.
Applying Lemma~\ref{basis} to $A^{(i-1)}$, we see that
$(A^{(i-1)})^e$ is generated by $\{ \xi _1^{(m_1)} \ldots  \xi _{i-1}^{(m_{i-1})}\mid m_j \geq 0 \ (1\leq j \leq i-1)\}$ as an $(A^{(i-1)})^o$-module.
Since $(d^{A^{(i)}}(X_i))^o \otimes 1 - 1^o \otimes d^{A^{(i)}}(X_i) \in (A^{(i-1)})^e$ and $d^{A^{(i)}}(X_i) \in A^{(i-1)}$ is a cycle, $d^{B^e} (\xi _i)\in(A^{(i-1)})^o\langle \xi _1, \ldots , \xi _{i-1}\rangle \subseteq  B^o\langle \xi _1, \ldots , \xi _{i-1}\rangle$ and is a cycle.
\end{proof}



Our next move is to define the notion of divided powers of the diagonal ideal $J$.

\begin{para}\label{para20201116a}
Let $B=A \langle X_1,\ldots,X_n \rangle$ with $n\leq \infty$, where $A$ is a divided power DG $R$-algebra. Recall from Lemma~\ref{lem20201116a} that $J= \Omega B^e$. For an integer $\ell\geq 0$ let
$$
\Mon_{\geq \ell}(\Omega)  =
\begin{cases}
\{ \xi_1^{(m_1)}\cdots \xi_n^{(m_n)}\in \Mon(\Omega)\mid \ m_1+\cdots +m_n \geq \ell \}&\text{if}\ n<\infty\\
\{ \xi_{i_1}^{(m_{i_1})}\cdots \xi_{i_t}^{(m_{i_t})}\in \Mon(\Omega)\mid \ m_{i_1}+\cdots +m_{i_t} \geq \ell \}&\text{if}\ n=\infty.
\end{cases}
$$
which is the set of monomials that are (symbolically) products of more than or equal to $\ell$ variables.
By Lemma~\ref{basis}, the set $\Mon_{\geq 1}(\Omega)=\Mon(\Omega)\backslash \{1^o\otimes 1\}$ is a basis for $J$ as a free $B^o$-module.
We define the \emph{$\ell$-th power of $J$} to be $J ^{(\ell)} := \Mon_{\geq \ell}(\Omega)B^e$. Note that $J=J^{(1)}$ and there is a descending sequence
$$B^e \supset J \supset J ^{(2)} \supset \cdots \supset J ^{(\ell)} \supset J ^{(\ell +1)} \supset \cdots$$
of the DG ideals in $B^e$; see Lemma~\ref{lem20201116b} below.
\end{para}

\begin{lem}\label{lem20201116b}
Let $B=A \langle X_1,\ldots,X_n \rangle$ with $n\leq \infty$, where $A$ is a divided power DG $R$-algebra. For every $\ell \geq 0$ the ideal $J ^{(\ell)}$ is a DG ideal of $B^e$ and
 we have
$J J^{(\ell)} = J^{(\ell)}J \subseteq J ^{(\ell +1)}$.
Moreover, the quotient $J ^{(\ell)}/J^{(\ell +1)}$  is a DG $B$-module.
\end{lem}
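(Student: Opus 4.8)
The plan is to leverage the description of $B^e$ as the free extension $B^o\langle\xi_1,\ldots,\xi_n\rangle$ from Theorem~\ref{free extension} together with the explicit $B^o$-basis $\Mon(\Omega)$ from Lemma~\ref{basis}. The organizing device is a single numerical invariant: for a monomial $\xi_{i_1}^{(m_1)}\cdots\xi_{i_t}^{(m_t)}\in\Mon(\Omega)$ I will call $m_{i_1}+\cdots+m_{i_t}$ its \emph{$\xi$-degree}. By Lemma~\ref{basis} and the definition in~\ref{para20201116a}, the first thing to verify is that $J^{(\ell)}$ coincides with the free graded $B^o$-submodule of $B^e$ spanned by those basis monomials of $\xi$-degree $\geq\ell$; this identification converts all three assertions of the lemma into statements about $\xi$-degrees. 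I treat only $n<\infty$, the case $n=\infty$ being identical after replacing $\Mon(\Omega)$ and $\Mon_{\geq\ell}(\Omega)$ by their $n=\infty$ forms.

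Next I would record the multiplicativity $J^{(p)}J^{(q)}\subseteq J^{(p+q)}$. In the divided power algebra $B^e$ the product of two basis monomials is either zero or a scalar multiple of a single basis monomial whose $\xi$-degree is the sum of the two $\xi$-degrees: this uses $\xi_i^{(a)}\xi_i^{(b)}=\binom{a+b}{a}\xi_i^{(a+b)}$, graded-commutative reordering for distinct indices, and $\xi_i^2=0$ when $|X_i|$ is odd. Hence the inclusion is immediate from the basis description. Since $B^e$ is graded commutative (being a free extension of the graded commutative DG algebra $B^o$), the right ideal $J^{(\ell)}=\Mon_{\geq\ell}(\Omega)B^e$ is automatically two-sided, and the special case $p=1$, $q=\ell$ supplies at once both the equality $JJ^{(\ell)}=J^{(\ell)}J$ and the containment $JJ^{(\ell)}\subseteq J^{(\ell+1)}$ required by the lemma.

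The main obstacle is showing $J^{(\ell)}$ is closed under $d^{B^e}$, i.e.\ that it is a \emph{DG} ideal; the content here is that the differential never lowers $\xi$-degree. The crucial input is $d^{B^e}(\xi_i)\in J=J^{(1)}$, which holds because $\xi_i\in J$ by~\ref{para20201115a} while $J$ is a DG ideal of $B^e$ by~\ref{para20201206a}. Combining this with the divided power differential rule $d^{B^e}(\xi_i^{(m)})=\xi_i^{(m-1)}d^{B^e}(\xi_i)$ (valid since $B^e$ is a divided power DG algebra) and the multiplicativity above, I obtain $d^{B^e}(\xi_i^{(m)})\in J^{(m-1)}J^{(1)}\subseteq J^{(m)}$, so differentiating a single divided power preserves $\xi$-degree. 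A Leibniz-rule expansion then shows $d^{B^e}$ carries any monomial of $\xi$-degree $\geq\ell$ into $J^{(\ell)}$, and extending $B^o$-linearly—using that $d^{B^e}$ of a $B^o$-coefficient remains in $B^o$ and that $J^{(\ell)}$ is a right ideal—gives $d^{B^e}(J^{(\ell)})\subseteq J^{(\ell)}$.

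Finally, because $J^{(\ell)}$ and $J^{(\ell+1)}$ are DG ideals of $B^e$, the quotient $J^{(\ell)}/J^{(\ell+1)}$ is a DG $B^e$-module. The relation $JJ^{(\ell)}=J^{(\ell)}J\subseteq J^{(\ell+1)}$ says precisely that $J$ annihilates this quotient on both sides, so its $B^e$-action factors through the isomorphism $B^e/J\cong B$ coming from~\eqref{eq20201114a}; thus $J^{(\ell)}/J^{(\ell+1)}$ is a DG $B$-module, completing the proof.
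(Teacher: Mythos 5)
Your proposal is correct and follows essentially the same route as the paper: both arguments use the identification $B^e=B^o\langle\xi_1,\ldots,\xi_n\rangle$ and the basis $\Mon(\Omega)$ to read off the multiplicativity $J^{(p)}J^{(q)}\subseteq J^{(p+q)}$ from products of divided-power monomials, establish $d^{B^e}(\xi_i^{(m)})=\xi_i^{(m-1)}d^{B^e}(\xi_i)\in J^{(m)}$ using that $J$ is a DG ideal, extend by the Leibniz rule to all monomials of $\xi$-degree $\geq\ell$, and conclude the quotient is a DG $B$-module because it is annihilated by $J$ and $B^e/J\cong B$. The only cosmetic difference is that you state the general multiplicativity up front and invoke graded commutativity of $B^e$ explicitly for two-sidedness, where the paper computes the $p=1$ case and says ``similarly.''
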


\begin{proof}
We prove the assertion for $n<\infty$. The case where $n=\infty$ is treated similarly by using the appropriate notation.

By Theorem \ref{free extension}, for all $1 \leq i \leq n$ and $m_1, \ldots , m_n \geq 0$ we have
$$
\xi _i\ (\xi_1^{(m_1)}\xi_2^{(m_2)}\cdots \xi_n^{(m_n)}) = (-1)^{|\xi_i|\left(\sum_{j=1}^{i-1} m_j|\xi_j|\right)}\ (m_i+1)\ \xi_1^{(m_1)}\cdots \xi_i^{(m_i +1)}\cdots \xi_n^{(m_n)}.
$$
This shows that $J J ^{(\ell)} \subseteq J ^{(\ell +1)}$. Similarly, $J^{(\ell)}J \subseteq J ^{(\ell +1)}$.

To prove that $J ^{(\ell)}$ is a DG ideal, we show that $d^{B^e} (J ^{(\ell)}) \subseteq J ^{(\ell)}$.
Recall, from the definition, that $J$ is the kernel of the DG $R$-algebra homomorphism $\pi_B$. Since $\pi_B$ is a chain map, we have $\pi_B( d^{B^e} (J ))=d^B(\pi_B(J))=0$. Hence, $d^{B^e} (J) \subseteq J$.

Note that $d^{B^e} (\xi_i^{(m)})\in J^{(m)}$ for all $1\leq i\leq n$ and $m\geq 1$.
In fact, we have  $d^{B^e} (\xi_i^{(m)}) = \xi_i^{(m-1)} d^{B^e} (\xi_i) \in J ^{(m-1)}J \subseteq J^{(m)}$.

Now assume that $\ell \geq 2$.
If $m_1+ \cdots + m_n \geq \ell$, then we have
$$
d^{B^e}( \xi_1^{(m_1)}\xi_2^{(m_2)}\cdots \xi_n^{(m_n)})
= \sum _{i=1}^n  \pm\ d^{B^e} (\xi _i^{(m_i)}) \left(\xi_1^{(m_1)} \cdots \xi_{i-1}^{(m_{i-1})}\xi_{i+1}^{(m_{i+1})}\cdots \xi_n^{(m_n)}\right)
$$
which is an element in $\sum _{i=1}^n J^{(m_i)} J^{(\ell -m_i)}\subseteq J^{(\ell)}$. Therefore, $d^{B^e} (J ^{(\ell)}) \subseteq J ^{(\ell)}$.

The assertion that $J ^{(\ell)}/J^{(\ell +1)}$  is a DG $B$-module follows from the facts that the underlying graded $B^e$-module $J ^{(\ell)}/J^{(\ell +1)}$ is annihilated by $J$ and $B^e/J \cong B$ as graded algebras.
\end{proof}

\begin{thm}\label{Jell/Jell+1}
Let $B=A \langle X_1,\ldots,X_n \rangle$ with $n\leq \infty$, where $A$ is a divided power DG $R$-algebra. For every $\ell \geq 0$, the DG $B$-module $J^{(\ell)}/J^{(\ell +1)}$ is semifree with the semifree basis $\Mon_{\ell}(\Omega)$. In case that $n<\infty$, this is a finite semifree basis.
\end{thm}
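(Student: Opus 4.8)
The plan is to first determine the underlying graded $B$-module structure of $J^{(\ell)}/J^{(\ell+1)}$, and then to exhibit an explicit well-ordering of $\Mon_\ell(\Omega)$ witnessing the semifree condition of~\ref{para20201124b}.

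For the graded structure I would invoke Lemma~\ref{basis}, which gives that $B^e$ is free over $B^o$ on $\Mon(\Omega)$. The first point is that $J^{(\ell)}=\Mon_{\geq\ell}(\Omega)B^e$ is exactly the $B^o$-span of $\Mon_{\geq\ell}(\Omega)$: for $\nu\in\Mon_{\geq\ell}(\Omega)$ and a basis monomial $\mu\in\Mon(\Omega)$, the product $\nu\mu$ is, by the divided-power rule $\xi_j\xi_j^{(m)}=(m+1)\xi_j^{(m+1)}$ together with graded-commutativity, an integer multiple of a single monomial whose $\xi$-degree is the sum of those of $\nu$ and $\mu$, hence $\geq\ell$; since $B^o$ and $B$ graded-commute inside $B^e$, this shows $\nu B^e\subseteq\Mon_{\geq\ell}(\Omega)B^o$. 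As $\Mon_{\geq\ell}(\Omega)$ is part of the $B^o$-basis $\Mon(\Omega)$, it follows that $J^{(\ell)}$ is $B^o$-free on $\Mon_{\geq\ell}(\Omega)$, and therefore $J^{(\ell)}/J^{(\ell+1)}$ is graded-free over $B^o\cong B$ on (the images of) $\Mon_\ell(\Omega)$; this basis is finite when $n<\infty$.

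The heart of the argument is the computation of the induced differential. For $w=\xi_1^{(m_1)}\cdots\xi_n^{(m_n)}\in\Mon_\ell(\Omega)$, the Leibniz rule together with the identity $d^{B^e}(\xi_i^{(m_i)})=\xi_i^{(m_i-1)}d^{B^e}(\xi_i)$ from the proof of Lemma~\ref{lem20201116b} gives
\[
d^{B^e}(w)=\sum_{i:\,m_i\geq1}\pm\,\xi_i^{(m_i-1)}\,d^{B^e}(\xi_i)\,\big(\xi_1^{(m_1)}\cdots\widehat{\xi_i^{(m_i)}}\cdots\xi_n^{(m_n)}\big).
\]
By the proof of Theorem~\ref{free extension}, $d^{B^e}(\xi_i)$ lies in $(A^{(i-1)})^o\langle\xi_1,\ldots,\xi_{i-1}\rangle\cap J$; hence its component of $\xi$-degree one has the form $\sum_{j<i}\xi_j\beta_{ij}$ with $\beta_{ij}\in B^o$, while all of its higher $\xi$-degree components push the corresponding summand into $J^{(\ell+1)}$. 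Consequently, modulo $J^{(\ell+1)}$, the differential of $w$ is a $B$-linear combination of the monomials obtained from $w$ by transferring a single unit of $\xi$-power from some index $i$ to a strictly smaller index $j<i$, the coefficient $(m_j+1)\beta_{ij}$ arising from $\xi_j\xi_j^{(m_j)}=(m_j+1)\xi_j^{(m_j+1)}$. (Monomials with $m_i\geq2$ for odd $|X_i|$ simply do not occur, consistent with the conventions of~\ref{para20201116a}.) Isolating this $\xi$-degree-preserving part of the differential and verifying that it is honestly a $B$-combination of these transferred monomials, and nothing else, is the delicate bookkeeping step, and it is where I expect the main effort to go.

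It then remains to choose the well-ordering. I would order $\Mon_\ell(\Omega)$ reverse-lexicographically by exponent vectors, declaring $w'<w$ exactly when, at the largest index where the exponents of $w$ and $w'$ differ, $w'$ has the smaller exponent. A transfer of power from $i$ to $j<i$ alters only the exponents at $i$ and $j$, so the largest index of disagreement is $i$, where the resulting monomial has exponent $m_i-1<m_i$; thus every monomial occurring in $d^{B^e}(w)$ modulo $J^{(\ell+1)}$ is strictly smaller than $w$, giving $\overline{d}(w)\in\sum_{w'<w}w'B$ as required by~\ref{para20201124b}. For $n<\infty$ the set $\Mon_\ell(\Omega)$ is finite, so this total order is automatically a well-order and the basis is finite. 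For $n=\infty$, sending each monomial to the weakly decreasing list (with multiplicity) of the indices appearing in it identifies the reverse-lexicographic order with the lexicographic order on $\mathbb{N}^\ell$, which is a well-order of type $\omega^\ell$; hence the order is a well-order in this case as well, completing the proof that $\Mon_\ell(\Omega)$ is a semifree basis.
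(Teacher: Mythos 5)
Your proposal is correct, and its first half --- identifying $J^{(\ell)}/J^{(\ell+1)}$ as the graded-free module on $\Mon_{\ell}(\Omega)$ over $B^o\cong B$ via Lemma~\ref{basis} and the multiplication rule $\xi_j\xi_j^{(m)}=(m+1)\xi_j^{(m+1)}$ --- is the same as the paper's. Where you genuinely diverge is in verifying the ordering condition of~\ref{para20201124b}. You compute the induced differential explicitly, showing that modulo $J^{(\ell+1)}$ it acts by transferring one unit of divided power from an index $i$ to a strictly smaller index $j<i$ (using that $d^{B^e}(\xi_i)$ is the diagonal of a cycle of $A^{(i-1)}$ and hence lies in the ideal generated by $\xi_1,\dots,\xi_{i-1}$), and you then construct a reverse-lexicographic well-order compatible with these transfers; your reduction of the $n=\infty$ case to the lexicographic order on weakly decreasing index lists is also sound. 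The paper bypasses all of this bookkeeping: since $\inf\bigl(J^{(\ell)}/J^{(\ell+1)}\bigr)\geq\ell$ and $B$ is non-negatively graded, any graded basis of a bounded-below graded-free DG $B$-module becomes a semifree basis once it is well-ordered so that lower-degree elements precede higher-degree ones (the differential drops degree by one and $B_{<0}=0$, so $\partial(f)\in\sum_{|e|<|f|}eB$), and such a well-order exists because the degrees are bounded below. So the paper's verification of semifreeness is essentially a one-line appeal to boundedness below, while yours costs a nontrivial computation; in exchange, your argument pins down the associated-graded differential on $J^{(\ell)}/J^{(\ell+1)}$ as an explicit ``transfer'' operator, which is finer information than the theorem needs and does not depend on $B$ being non-negatively graded. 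Both routes are valid.
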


\begin{proof}
Recall from~\ref{para20201114e} that $B^o$ is a DG $R$-subalgebra of $B^e$. By definition of $J^{(\ell)}$ from~\ref{para20201116a},
the underlying  graded $B^o$-module $J^{(\ell)}/J^{(\ell +1)}$ is free with the basis $\Mon_{\ell}(\Omega)$.
Note that the composition of the maps
$B^o \to  B^e  \xra{\pi_B} B$ defined by $b^o\mapsto b^o\otimes 1\mapsto b$ is an isomorphism, and that the $B$-module structure on  $J^{(\ell)}/J^{(\ell +1)}$ from Lemma~\ref{lem20201116b} coincides with its $B^o$-module structure.
Thus,   $J^{(\ell)}/J^{(\ell +1)}$  is free as an underlying graded $B$-module. Therefore, $J^{(\ell)}/J^{(\ell +1)}$ is a semifree DG $B$-module with semifree basis $\Mon_{\ell}(\Omega)$.
\end{proof}

\begin{para}\label{para20210105c}
Let $B=A[X_1,\ldots,X_n]$ with $n\leq \infty$ be a polynomial extension of the DG $R$-algebra $A$ with variables $X_1,\ldots,X_n$ of positive degrees. For each $1\leq i\leq n$, consider the diagonal $\xi_i$ of the variable $X_i$ defined in~\eqref{xi}. In this case we have
\begin{equation}\label{xim}
\xi_i^m=\sum _{j=0}^m (-1)^{m-j} \binom{m}{j} \left(\left(X_i^{j}\right)^o \otimes X_i^{m-j}\right).
\end{equation}
Hence, similar to~\ref{para20201115a}, we can consider the set $\{ \xi _i ^{m} \mid 1 \leq i \leq n,  \ m\in \mathbb{N}\}\subseteq J$, which we again denote by $\Omega$ in this case.
Replacing divided powers $X_i^{(m)}$ and $\xi_i^{(m)}$ by ordinary powers $X_i^{m}$ and $\xi_i^m$, we can show that Lemmas~\ref{lem20201116a} and~\ref{basis} hold in this case as well. Hence, similar to Theorem~\ref{free extension}, we have $B^e = B^o [\xi _1, \ldots , \xi _n]$. Note that in this case for an integer $\ell\geq 0$ we have $J^{\ell}=\Mon_{\geq \ell}(\Omega)B^e$ and $J^{\ell}/J^{\ell+1}$ is a semifree DG $B$-module with the semifree basis $\Mon_{\ell}(\Omega)$.
\end{para}

We can now prove the following which is a key to the proof of Main Theorem.

\begin{cor}\label{cor20210105a}
Let $n\leq \infty$. We consider the following two cases:
\begin{enumerate}[\rm(a)]
\item
$B=A[X_1,\ldots,X_n]$; or
\item
$A$ is a divided power DG $R$-algebra and $B=A \langle X_1,\ldots,X_n \rangle$.
\end{enumerate}
Then $B$ is DG quasi-smooth over $A$. If $n < \infty$, then $B$  is DG smooth over $A$.
\end{cor}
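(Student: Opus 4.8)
The plan is to realize the abstract filtration required by Definition~\ref{defn20210105a} using the concrete powers of the diagonal ideal that have already been constructed. In the divided power case (b) I would set $J^{[\ell]} := J^{(\ell)}$, the $\ell$-th power introduced in~\ref{para20201116a}; in the polynomial case (a) I would set $J^{[\ell]} := J^\ell$, the analogous power described in~\ref{para20210105c}. With these choices, verifying DG quasi-smoothness reduces to checking the three conditions (i)--(iii) of Definition~\ref{defn20210105a}, and almost all of the work has been carried out in the preceding results, so the corollary is largely an assembly step.

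Condition (i), that $B$ is free as an underlying graded $A$-module, is immediate from the basis exhibited in~\ref{para20201124a}. For condition (ii), Lemma~\ref{lem20201116b} already gives that each $J^{(\ell)}$ is a DG ideal of $B^e$, that the sequence $J = J^{(1)} \supset J^{(2)} \supset \cdots$ is descending, and that $J J^{(\ell)} = J^{(\ell)} J \subseteq J^{(\ell+1)}$; the polynomial case is identical by~\ref{para20210105c}. Condition (iii) is exactly Theorem~\ref{Jell/Jell+1} (resp.\ its polynomial analog in~\ref{para20210105c}), which asserts that $J^{(\ell)}/J^{(\ell+1)}$ is a semifree DG $B$-module with semifree basis $\Mon_\ell(\Omega)$.

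The one point that still requires a genuine argument is the degree estimate $\inf(J^{[\ell]}) \geq \ell$ demanded by condition (ii), and I regard this as the crux of the corollary. Here I would use that, by Lemma~\ref{basis} and~\ref{para20201116a}, $J^{[\ell]}$ is a free graded $B^o$-module on $\Mon_{\geq \ell}(\Omega)$, while $B^o$ is concentrated in non-negative degrees. Each basis monomial $\xi_1^{(m_1)}\cdots \xi_n^{(m_n)}$ with $m_1 + \cdots + m_n \geq \ell$ has degree $\sum_i m_i|X_i|$, and since every variable satisfies $|X_i| \geq 1$, this degree is at least $\sum_i m_i \geq \ell$. Hence every nonzero element of $J^{[\ell]}$ has degree $\geq \ell$, so $\inf(J^{[\ell]}) \geq \ell$ and in particular $\bigcap_\ell J^{[\ell]} = (0)$. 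This is precisely where the positivity hypothesis on the degrees of the variables enters; everything else is bookkeeping inherited from the earlier lemmas.

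Finally, for the DG smoothness claim when $n < \infty$, I would invoke the finiteness clause of Theorem~\ref{Jell/Jell+1}: the semifree basis $\Mon_\ell(\Omega)$ is finite whenever $n < \infty$, since there are only finitely many monomials $\xi_1^{(m_1)}\cdots\xi_n^{(m_n)}$ with $m_1 + \cdots + m_n = \ell$. Thus each $J^{[\ell]}/J^{[\ell+1]}$ admits a finite semifree basis, which upgrades DG quasi-smoothness to DG smoothness and completes the proof.
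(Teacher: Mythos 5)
Your proposal is correct and follows essentially the same route as the paper: take $J^{[\ell]}$ to be the concrete powers $J^{(\ell)}$ (divided power case) resp.\ $J^{\ell}$ (polynomial case) and read off conditions (i)--(iii) of Definition~\ref{defn20210105a} from Lemma~\ref{lem20201116b}, Theorem~\ref{Jell/Jell+1}, and~\ref{para20210105c}. You additionally spell out the degree estimate $\inf(J^{[\ell]})\geq\ell$ via the $B^o$-basis $\Mon_{\geq\ell}(\Omega)$ and the positivity of the $|X_i|$, a point the paper's proof leaves implicit; this is a worthwhile clarification, not a deviation.
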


\begin{proof}
Note that by definition of $J^{(\ell)}$ and Lemma~\ref{basis}, the quotient $J/J^{(\ell)}$ is a semifree DG $B$-module with the semifree basis $\Mon(\Omega)\backslash \Mon_{\geq \ell}(\Omega)$. In case (a), set $J^{[\ell]} = J ^{(\ell)}$ and in case (b), set $J^{[\ell]} = J ^{\ell}$ for each positive integer $\ell$. The assertion follows from Lemma~\ref{lem20201116b}, Theorem~\ref{Jell/Jell+1}, and~\ref{para20210105c}.
\end{proof}

\begin{para}{\emph{Proof of Main Theorem.}}\label{para20210107s}
The assertion follows from Theorem~\ref{thm2021017a} and Corollary~\ref{cor20210105a}. \qed
\end{para}

The following result follows from Main Theorem(a) and~\ref{para20201126d}.

\begin{cor}\label{cor20201126a}
Assume that $A=R$, or $A$ is a DG $R$-algebra with $R$ containing the field of rational numbers, and let $B=A\langle X_1,\ldots,X_n\rangle$. If  $N$ is a bounded below semifree DG $B$-module such that $\Ext _B ^i (N, N)=0$  for  all $i\geq 1$, then  $N$ is na\"ively liftable to $A$. Moreover, $N$ is a direct sum of a DG $B$-module that is liftable to $A$.
\end{cor}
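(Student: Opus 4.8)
The plan is to deduce this corollary directly from the Main Theorem: the hypotheses imposed on $N$ (bounded below, semifree, with $\Ext_B^i(N,N)=0$ for all $i\geq 1$) are verbatim those of the Main Theorem, so no new module-theoretic argument is required. All that is needed is to recognize, in each of the two scenarios permitted for $A$, which of the cases (a) or (b) of the Main Theorem applies to the extension $B=A\langle X_1,\ldots,X_n\rangle$.

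First I would treat the case in which $R$ contains the field of rational numbers and $A$ is an arbitrary DG $R$-algebra. Here the last sentence of~\ref{para20201112a} asserts that over such an $R$ every simple free extension coincides with the simple polynomial extension, $A\langle X\rangle=A[X]$. Applying this along the tower $A=A^{(0)}\subset A^{(1)}\subset\cdots\subset A^{(n)}=B$ of~\ref{para20201112b}---noting that each intermediate $A^{(i-1)}$ is again a DG algebra over the same $\mathbb{Q}$-algebra $R$, so the cited identity applies at every stage---yields $A^{(i-1)}\langle X_i\rangle=A^{(i-1)}[X_i]$, and hence by induction $B=A\langle X_1,\ldots,X_n\rangle=A[X_1,\ldots,X_n]$. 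Thus $B$ is a finite polynomial extension of $A$, we are in case (a) of the Main Theorem, and the conclusion follows at once.

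Next I would handle the case $A=R$, where $R$ is an arbitrary commutative ring. By the second assertion of~\ref{para20201126d}, the ring $R$ viewed as a graded $R$-algebra concentrated in degree $0$ is a divided power $R$-algebra (the divided power operations being vacuous, as there are no elements of positive even degree, so Definition~\ref{para20201203a} holds trivially). Consequently $B=R\langle X_1,\ldots,X_n\rangle$ is a finite free extension of the divided power DG $R$-algebra $A=R$, which places us in case (b) of the Main Theorem, and the conclusion again follows. I would also record the unifying remark that, by the first assertion of~\ref{para20201126d}, the hypothesis $\mathbb{Q}\subseteq R$ likewise endows $A$ with a divided power structure; so in fact both scenarios can be routed through case (b) alone, should one prefer a single invocation of the Main Theorem.

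I do not expect a genuine obstacle: the entire content resides in the Main Theorem, and the corollary is obtained purely by matching hypotheses. The only points that warrant care are the two bookkeeping identifications above---that iterating $A\langle X\rangle=A[X]$ is legitimate at each stage of the tower, and that the degenerate degree-$0$ divided power structure on $R$ really meets the requirements of Definition~\ref{para20201203a}---and both are immediate from the cited paragraphs.
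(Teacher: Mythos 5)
Your proposal is correct and follows essentially the same route as the paper, whose entire proof is the remark that the corollary ``follows from Main Theorem(a) and~\ref{para20201126d}'': when $\mathbb{Q}\subseteq R$ one uses $A\langle X\rangle=A[X]$ to land in case (a), and the divided power structure on $R$ (or on $A$, via~\ref{para20201126d}) handles the rest through case (b). Your added bookkeeping about iterating the identity along the tower and about the vacuous divided powers on $R$ in degree $0$ is exactly the content the paper leaves implicit.
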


\section{Auslander-Reiten Conjecture and na\"ive lifting property}\label{sec20201126n}

Our study in this paper is motivated by the following long-standing conjecture posed by Auslander and Reiten which has been studied in numerous works; see for instance~\cite{AY, auslander:lawlom, avramov:svcci, avramov:edcrcvct, avramov:phcnr, huneke:voeatoscmlr, huneke:vtci, jorgensen:fpdve, nasseh:vetfp, nasseh:lrqdmi, nasseh:oeire, MR1974627, sega:stfcar}, to name a few.

\begin{ARC}[\protect{\cite[p.\ 70]{AR}}]
\emph{Let $(S,\fn)$ be a local ring and $M$ be a finitely generated $S$-module. If $\Ext^i_S(M\oplus S,M\oplus S)=0$ for all $i>0$, then $M$ is a free $S$-module.}
\end{ARC}

Our Main Theorem in this paper considers na\"ive liftability of DG modules along \emph{finite} free extensions of DG algebras. However, in dealing with the Auslander-Reiten Conjecture, we need to work with \emph{infinite} free extensions of DG algebras. So, we pose the following conjecture for which we do not have a proof yet.

\begin{NLC}\label{conj20210108b}
\emph{Assume that $A$ is a divided power DG $R$-algebra, and let $B=A\langle X_i\mid i\in \mathbb{N}\rangle$. If $N$ is a bounded below semifree DG $B$-module such that $\Ext^{i}_{B}(N\oplus B,N\oplus B)=0$ for all $i\geq 1$, then $N$ is na\"ively liftable to $A$.}
\end{NLC}

Our next result explains the relation between these conjectures.

\begin{thm}\label{thm20210108z}
If Na\"ive Lifting Conjecture holds, then the Auslander-Reiten Conjecture holds.
\end{thm}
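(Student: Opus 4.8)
The plan is to manufacture, out of the module data in the Auslander--Reiten Conjecture, a DG setup in which the Na\"ive Lifting Conjecture applies, and then to convert the resulting naïve liftability into a finiteness statement for projective dimension. First I would reduce to the complete case: passing to the $\fn$-adic completion preserves the hypothesis $\Ext^i_S(M\oplus S,M\oplus S)=0$ (flat base change of $\operatorname{Ext}$ for finitely generated modules), and $M$ is free over $S$ if and only if $\widehat M$ is free over $\widehat S$ (faithfully flat descent). So I may assume $S$ is complete, and by the Cohen structure theorem write $S=R/I$ with $R$ a complete regular local ring. Regarding $R$ as a divided power DG $R$-algebra concentrated in degree $0$ (see~\ref{para20201126d}), I set $A=R$ and let $B=R\langle X_i\mid i\in\mathbb{N}\rangle$ be the acyclic closure (Tate resolution) of $S$ over $R$; this is a free extension satisfying the degree-wise finiteness condition whose augmentation $B\to S$ is a quasi-isomorphism (see~\ref{para20201205a}), which is exactly the situation contemplated by the Na\"ive Lifting Conjecture.

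Next I would choose a bounded below semifree DG $B$-module $N$ with $N\simeq M$ in $\D(B)$, i.e.\ a semifree resolution of $M$ viewed over $B$ via $B\to S$. Since $B\to S$ is a quasi-isomorphism, restriction induces an equivalence $\D(S)\simeq\D(B)$ carrying $M\leftrightarrow N$ and $S\leftrightarrow B$ and preserving $\operatorname{Ext}$; thus $\Ext^i_B(N,N)\cong\Ext^i_S(M,M)$ and $\Ext^i_B(N,B)\cong\Ext^i_S(M,S)$, while $\Ext^i_B(B,N)\cong H_{-i}(N)=0$ and $\Ext^i_B(B,B)\cong H_{-i}(B)=0$ for $i>0$. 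Hence the hypothesis $\Ext^i_S(M\oplus S,M\oplus S)=0$ for all $i>0$ becomes precisely $\Ext^i_B(N\oplus B,N\oplus B)=0$ for all $i\geq 1$. Assuming the Na\"ive Lifting Conjecture, $N$ is naïvely liftable to $A=R$, so by Corollary~\ref{naive definition} $N$ is a direct summand of $N|_R\otimes_R B$ in $\C(B)$, hence in $\D(B)$. Transporting along $\D(B)\simeq\D(S)$, and using that $N|_R$ is a semifree $R$-resolution of $M$ so that the image of $N|_R\otimes_R B$ is $M\lotimes_R S$ in $\D(S)$, I conclude that $M$ is a direct summand of $M\lotimes_R S$ in $\D(S)$.

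Finally I would extract finite projective dimension. Applying the triangulated functor $-\lotimes_S k$, with $k$ the common residue field, and using associativity of derived tensor products, $M\lotimes_S k$ is a direct summand of $(M\lotimes_R S)\lotimes_S k\simeq M\lotimes_R k$ in $\D(k)$. Because $R$ is regular, $\pd_R M<\infty$, so $H_*(M\lotimes_R k)=\Tor^R_*(M,k)$ is bounded; since $\D(k)$ is semisimple, a direct summand has no larger homology in any degree, which forces $\Tor^S_i(M,k)=0$ for $i>\pd_R M$ and hence $\pd_S M<\infty$. Combined with the surviving half $\Ext^i_S(M,S)=0$ $(i>0)$ of the hypothesis, the standard minimal-resolution argument (if $\pd_S M=n\geq 1$ then $\Ext^n_S(M,S)\neq 0$) gives $\pd_S M=0$, i.e.\ $M$ is free.

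I expect the main obstacle to be the derived-category bookkeeping that makes the two translations rigorous: identifying $\operatorname{Ext}_B$ with $\operatorname{Ext}_S$ through the quasi-isomorphism $B\to S$, and verifying that the splitting furnished by naïve liftability in $\C(B)$ yields the summand relation $M\mid M\lotimes_R S$ in $\D(S)$ with the correct module structures. The conceptual crux is the observation that naïve liftability to the regular base $R$ forces finite projective dimension over $S$; once this is established, finiteness of projective dimension together with $\Ext^{>0}_S(M,S)=0$ delivers freeness by a routine argument.
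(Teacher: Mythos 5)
Your proposal is correct and follows essentially the same route as the paper's proof: complete $S$, take the minimal Cohen presentation $S\cong R/I$, resolve $S$ by a Tate extension $B=R\langle X_i\mid i\in\mathbb{N}\rangle$, transport the Ext-vanishing across the equivalence $\D(S)\simeq\D(B)$, apply the conjecture together with Corollary~\ref{naive definition} to realize $M$ as a direct summand of $M\lotimes_R S$ in $\D(S)$, and use regularity of $R$ to conclude $\pd_S M<\infty$. The only divergence is in the last step, where the paper cites~\cite[Theorem 2.3]{CH} while you instead invoke the elementary fact that $\Ext^{n}_S(M,S)\neq 0$ when $\pd_S M=n$ with $1\le n<\infty$, which together with the surviving hypothesis $\Ext^{>0}_S(M,S)=0$ yields freeness just as well.
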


\begin{proof}
Let $(S,\fn)$ be a local ring and $M$ be a finitely generated $S$-module with $\Ext^i_S(M\oplus S,M\oplus S)=0$ for all $i>0$. Without loss of generality we can assume that $S$ is complete in its $\frak n$-adic topology. Consider the minimal Cohen presentation $S\cong R/I$ of $S$, where $R$ is a regular local ring and $I$ is an ideal of $R$. By a construction of Tate~\cite{Tate}, there is a DG $R$-algebra
$B=R\langle X_i\mid i\in \mathbb{N}\rangle$
that resolves $S$ as an $R$-module, that is, $S\simeq B$.
The $S$-module $M$  is regarded as a DG $B$-module via the natural augmentation  $B \to S$.
This homomorphism of DG $S$-algebras induces a functor $\mathfrak{F}\colon \D(S) \to \D(B)$  of the derived categories.
Since $S\simeq B$, by Keller's Rickard Theorem~\cite{Keller}, the functor $\mathfrak{F}$ yields a triangle equivalence and its quasi-inverse is given by  $-\lotimes_BS$.

Let $N\xra{\simeq} M$ be a semifree resolution of the DG $B$-module $M$; see~\cite{avramov:ifr} for more information.
Then, as an underlying graded free $B$-module, $N$ is non-negatively graded and $\HH(N)\cong \HH(M)= M$, which is bounded and finitely generated over $\HH_0(B)\cong R$.
Note that $M$ corresponds to $N$ and $S$ corresponds to $B$ under the functor  $\mathfrak{F}$.
Since $\mathfrak{F}$ is a triangle equivalence, we conclude that
$\Ext _B^i (N\oplus B, N\oplus B)=0$ for all $i\geq 1$.
By our assumption, $N$ is na\"ively liftable to $A$. In particular, by Corollary~\ref{naive definition}, $N$ is a direct summand of $N|_R\otimes_R B$. Using the category equivalence $\mathfrak{F}$, we see that $M$ is a direct summand of  $M \lotimes  _R S$ in $\D (S)$ which is a bounded free complex over $S$,  since  $R$ is regular. Hence, $\pd_S(M)<\infty$. It then follows from \cite[Theorem 2.3]{CH} that $M$ is free over $S$.
\end{proof}

\begin{para}\label{para20210108d}
According to the proof of Theorem~\ref{thm20210108z}, we do not need to prove the Na\"ive Lifting Conjecture in its full generality for the Auslander-Reiten Conjecture; only proving it for the case where $A=R$ is a regular local ring would suffice for this purpose. Note that, despite the finite free extension case, the assumption ``$\Ext^{i}_{B}(N,N)=0$ for all $i\geq 1$'' is not enough for the Na\"ive Lifting Conjecture to be true in general. The reason is that there exist non-free finitely generated modules $M$ over a general local ring $S$ satisfying $\Ext^{i}_{S}(M,M)=0$ for all $i\geq 1$; see, for instance, \cite{JLS}.
\end{para}

\begin{para}\label{para20210108b}
In the proof of Theorem~\ref{thm20210108z}, if $S$ is resolved as an $R$-module by a finite free extension $B=R\langle X_1,\ldots,X_n\rangle$, then $S$ is known to be a complete intersection ring. Hence, in this case, our Main Theorem and Theorem~\ref{thm20210108z} just provide another proof for the well-known fact that complete intersection rings satisfy the Auslander-Reiten Conjecture; see, for instance, \cite{AY,avramov:svcci,jorgensen:fpdve}.
\end{para}



\providecommand{\bysame}{\leavevmode\hbox to3em{\hrulefill}\thinspace}
\providecommand{\MR}{\relax\ifhmode\unskip\space\fi MR }
\providecommand{\MRhref}[2]{%
  \href{http://www.ams.org/mathscinet-getitem?mr=#1}{#2}
}
\providecommand{\href}[2]{#2}

\end{document}